\newcommand{\R}{\mathbb R}
\newtheorem{theorem}{Theorem} [section]
\newtheorem{lemma}{Lemma} [section]
\newtheorem{corollary}{Corollary} [section]
\newtheorem{definition}{Definition} [section]
\newtheorem{remark}{Remark}[section]
\let\ssection=\section\renewcommand{\section}{\setcounter{equation}{0}\ssection}
\begin{document}

\title [Universal inequalities]{Universal inequalities for the eigenvalues of a power of the Laplace operator}
\author{Sa\"{\i}d Ilias and Ola Makhoul}
\date{09 novembre 2009}

\address{S. Ilias, O. Makhoul: Universit\'e Fran\c{c}ois rabelais de Tours, Laboratoire de Math\'ematiques
et Physique Th\'eorique, UMR-CNRS 6083, Parc de Grandmont, 37200
Tours, France} \email{ilias@univ-tours.fr, ola.makhoul@lmpt.univ-tours.fr}

\keywords{eigenvalues, Laplacian, polyharmonic operator, biharmonic operator, clamped plate, Payne-Polya-Weinberger inequality, Hile-Protter inequality, Yang inequality, universal inequalities, commutators, Kohn Laplacian, Heisenberg group}
\subjclass[2000]{35P15;58C40}

\begin{abstract}
In this paper, we obtain a new abstract formula relating
eigenvalues of a self-adjoint operator to two families of symmetric
and skew-symmetric operators and their commutators. This formula generalizes earlier ones obtained by Harrell, Stubbe, Hook, Ashbaugh, Hermi, Levitin and Parnovski. We also show how one
can use this abstract formulation both for giving different and simpler
proofs for all the known results obtained for the eigenvalues of a power of the Laplace operator (i.e. the Dirichlet Laplacian, the clamped plate problem for the bilaplacian and more generally for the polyharmonic problem on a bounded Euclidean domain) and to obtain new ones. In a last paragraph, we derive new bounds for eigenvalues of any power of the Kohn Laplacian on the Heisenberg group.
\end{abstract}

\maketitle
\section{Introduction}
Let $\Omega$  be a bounded domain of an n-dimensional Euclidean space $\R^n$ and consider the following eigenvalue problem for the polyharmonic operator :
\begin{equation}\label {pol}
\begin{cases}
(-\Delta)^l u  = \lambda u \; \;\text{in} \,\, \Omega, \\
u = \displaystyle{\frac{\partial u}{\partial \nu}=\dots=\frac{\partial^{l-1}
u}{\partial \nu^{l-1}}=0 \;\;\text{on} \,\, \partial \Omega, }
\end{cases}
\end{equation}
where $\Delta$ is the Laplace operator and $\nu$ is the outward unit normal.\\
It is known that this eigenvalue problem has a discrete spectrum, 
$$ 0 < \lambda_1 < \lambda_2 \leq \dots \leq \lambda_k \leq
\ldots \rightarrow +\infty $$  
In this paper we will be interested in "Universal"(i.e. not depending on the domain) inequalities for the eigenvalues of such a polyharmonic  problem and especially we will show how to derive them from a general abstract algebraic formula in the spirit of the work of  Harrell, Stubbe, Ashbaugh and Hermi.\\
\indent Let us begin by giving a short and non-exhaustive presentation of the known results in this field.

The first result concerns the Dirichlet Laplacian (i.e. when $l=1$). In this case, Polya, Payne and Weinberger (henceforth PPW) proved in 1955 the following bound (see \cite{PPW} for dimension 2 and \cite{Thom} for all dimensions), for $k=1,2,\dots$
\begin{equation}\label{ppw}
\displaystyle \lambda_{k+1}-\lambda_k \leq \frac{4}{nk} \sum_{i=1}^k
\lambda_i,
\end{equation}
This result was improved in 1980 by Hile and Protter \cite{HileProt} (henceforth HP) who showed that, for $k=1,2,\ldots$
\begin{equation}
\displaystyle \frac{nk}{4} \leq \sum_{i=1}^k
\frac{\lambda_i}{\lambda_{k+1}-\lambda_i}.
\end{equation}
In 1991, H.C.Yang (see \cite{Yang.HC} and more recently
\cite{ChengYang1}) proved
\begin{equation}\label{1}
\displaystyle \sum_{i=1}^k (\lambda_{k+1}-\lambda_i)^2 \leq
\frac{4}{n} \sum_{i=1}^k \lambda_i(\lambda_{k+1}-\lambda_i),
\end{equation}
which is, until now, the best improvement of the PPW inequality.
From inequality (\ref{1}), we can infer a weaker form
\begin{equation}\label{yang2}
  \lambda_{k+1} \le (1+\frac{4}{n})\frac{1}{k} \left(\sum_{i=1}^k \lambda_i\right).
\end{equation}
We shall refer to inequality (\ref{1}) as Yang's first inequality (or simply Yang inequality) and to inequality (\ref{yang2}) as Yang's second inequality.\\
The comparison of all these inequalities (see \cite{Ashb2}) can be summarized in
$$ \text{Yang}\;1 \Longrightarrow  \text{Yang}\;2 \Longrightarrow \text{HP}  \Longrightarrow \text{PPW}  $$

When $l=2$, the eigenvalue problem (\ref {pol}) for the bilaplacian is the clamped plate problem. In the same paper as before \cite {PPW}, Polya, Payne and Weinberger proved the following analog of the formula (\ref{ppw})
\begin{equation}\label{2}
\displaystyle \lambda_{k+1}-\lambda_k \leq \frac{8(n+2)}{n^2 k}
\sum_{i=1}^k \lambda_i.
\end{equation}
And as was noticed by Ashbaugh (see \cite{Ashb} inequality (3.56)), there is a better inequality which was implicit in the PPW work, 
\begin{equation*}
 \lambda_{k+1}-\lambda_k \leq \frac{8(n+2)}{n^2k^2}\bigg( \sum_{i=1}^k \lambda_i^{\frac{1}{2}} \bigg)^2.
\end{equation*}
In 1984, Hile and Yeh \cite{HileYeh} extended the approach used for the Laplacian in \cite{HileProt} and proved the sharpest bound
\begin{equation}\label{2'}
\displaystyle \frac{n^2 k^{\frac{3}{2}}}{8(n+2)} \leq \bigg(\sum_{i=1}^k
\frac{\lambda_i^{\frac{1}{2}}}{\lambda_{k+1}-\lambda_i}\bigg)
\bigg(\sum_{i=1}^k \lambda_i \bigg)^{\frac{1}{2}}.
\end{equation}
Then in 1990, Hook \cite{Hook1}, Chen and Qian \cite{ChenQian3} proved
independently the following stronger inequality which was again implicit in the work of Hile and Yeh (see also \cite{Ashb}, \cite{ChenQian1}, \cite{ChenQian2} and \cite{ChenQian4})
\begin{equation}\label{3}
\displaystyle \frac{n^2 k^2}{8(n+2)} \leq \bigg( \sum_{i=1}^k
\frac{\lambda_i^{\frac{1}{2}}}{\lambda_{k+1}-\lambda_i}\bigg)
\bigg(\sum_{i=1}^k \lambda_i^{\frac{1}{2}} \bigg).
\end{equation}
Using Chebyshev inequality, Ashbaugh (see \cite{Ashb} inequality (3.60)) deduces from the preceding inequality (\ref{3}), the following HP version which is weaker and more esthetically appealing,
\begin{equation}\label{3'}
\displaystyle \frac{n^2 k}{8(n+2)} \leq \sum_{i=1}^k
\frac{\lambda_i}{\lambda_{k+1}-\lambda_i}.
\end{equation}
Recently, Cheng and Yang \cite{ChengYang} established the following Yang version
\begin{equation}\label{4}
\sum_{i=1}^{k}(\lambda_{k+1}-\lambda_i )\leq
\bigg[ \frac{8(n+2)}{n^2} \bigg]^{\frac{1}{2}}
\sum_{i=1}^k \Big[ \lambda_i(\lambda_{k+1}-\lambda_i)
\Big]^{\frac{1}{2}}.
\end{equation}
For any $l$, the PPW inequality is given by 
\begin{equation*}
\displaystyle \lambda_{k+1}-\lambda_k \leq \frac{4l(2l+n-2)}{n^2
k^2} \bigg(\sum_{i=1}^k \lambda_i^{\frac{1}{l}}\bigg) \bigg(\sum_{i=1}^k
\lambda_i^{\frac{l-1}{l}} \bigg).
\end{equation*}
Its HP improvement was proved independently by Hook
\cite{Hook1} and Chen and Qian \cite{ChenQian3}, it reads
\begin{equation}\label{7}
\displaystyle \frac{n^2 k^2}{4l(2l+n-2)}\leq \sum_{i=1}^k
\frac{\lambda_i^{\frac{1}{l}}}{\lambda_{k+1}-\lambda_i} \sum_{i=1}^k
\lambda_i^{\frac{l-1}{l}}.
\end{equation}
As in the case $l=2$ (inequality (\ref{3'})), this reduces to the weaker form 
\begin{equation}\label{7'}
\displaystyle \frac{n^2 k}{4l(2l+n-2)}\leq \sum_{i=1}^k
\frac{\lambda_i}{\lambda_{k+1}-\lambda_i}.
\end{equation}
In 2007, Wu and Cao \cite{WuCao} generalized the
inequality (\ref{4}) of Cheng and Yang to the polyharmonic problem
and obtained 
\begin{align}\label{WuCao}
{}&\sum_{i=1}^k (\lambda_{k+1}-\lambda_i) \leq \\
\frac{1}{n}\left(4l(n+2l-2)\right)^{\frac{1}{2}}&\bigg( \sum_{i=1}^k (\lambda_{k+1}-\lambda_i)^{\frac{1}{2}} \lambda_i^{\frac{l-1}{l}}\bigg)^{\frac{1}{2}} 
\bigg(\sum_{i=1}^k(\lambda_{k+1}-\lambda_i)^{\frac{1}{2}}\lambda_i^{\frac{1}{l}}\bigg)^{\frac{1}{2}}.\nonumber
\end{align}
This inequality is sharper than inequality (\ref{7})(see
\cite{WuCao}).\\
 Very recently, Cheng, Ichikawa and Mametsuka \cite{Cheng-Ichik-Mamet} derived the following Yang type inequality for the polyharmonic operator (i.e. such that for $l=1$,
we have the Yang inequality (\ref{1}))
\begin{equation}\label{8'}
\sum_{i=1}^k(\lambda_{k+1}-\lambda_i)^2 \leq
\frac{4l(2l+n-2)}{n^2}\sum_{i=1}^k(\lambda_{k+1}-\lambda_i)\lambda_i.
\end{equation}
\indent All the classical proofs of these inequalities are based on tricky
and careful choices of trial functions. For a more comprehensive and general approach, it is important to see if all these inequalities can be deduced using purely algebraic arguments involving eigenvalues and eigenfunctions of an abstract self-adjoint operator acting on a Hilbert space. In the case of the Laplacian (i.e. $l=1$), this was done by Harrell \cite{Harl1,Harl2}, Harrell and Michel \cite{HarlMichel1, harlMichel2}, Harrell and Stubbe \cite{HarlStub}, and Ashbaugh and Hermi \cite{AshbHer1}.\\
For the polyharmonic problem (i.e. general $l$), Hook \cite{Hook1} generalized the argument of Hile and Protter \cite{HileProt} in an abstract setting. Later, this abstract formulation of Hook was simplified and improved by Ashbaugh and Hermi \cite{AshbHer2}. In fact, they obtained the following
inequality relating eigenvalues of a self-adjoint
operator $A$, to two families of symmetric operators $B_p's$,
skew-symmetric operators $T_p's$, $p=1,\dots,n$ and their commutators (for a precise statement with detailed assumptions, see Theorem 2.2 of \cite{AshbHer2}),
\begin{equation}\label{Ashb-Her}
\frac{1}{4} \frac{\Big(\sum_{i=1}^k \sum_{p=1}^n \langle
[B_p,T_p]u_i,u_i \rangle \Big)^2}{\sum_{i=1}^k \sum_{p=1}^n \langle
[A,B_p]u_i,B_p u_i \rangle} \leq \sum_{i=1}^k \sum_{p=1}^n \frac
{\langle T_p u_i,T_p u_i \rangle}{\lambda_{k+1}-\lambda_i}.
\end{equation}
But this abstract inequality, as was observed by Ashbaugh and Hermi in the end of the third paragraph of their article \cite{AshbHer2},  could not recover
more than the HP version of the universal inequalities (i.e. inequalities (\ref{7}) and (\ref{7'})). \\
The main goal of the present paper is
to prove the following abstract inequality (with the same assumptions as those for the Ashbaugh-Hermi inequality (\ref{Ashb-Her}))which generalizes (\ref{Ashb-Her}) and fills this gap 
\begin{align} \label{main formula}
\bigg(\sum_{i=1}^k
\sum_{p=1}^n &f(\lambda_i)\langle[T_p,
B_p]u_i,u_i\rangle \bigg)^2 \\
\le 4 \bigg(\sum_{i=1}^k\sum_{p=1}^n g(\lambda_i)& \langle
[A,B_p]u_i, B_p u_i\rangle \bigg) \bigg(\sum_{i=1}^k
\sum_{p=1}^n \frac{\big(f(\lambda_i)\big)^2}{g(\lambda_i)(\lambda_{k+1}-\lambda_i)}\|T_p u_i\|^2\bigg),\nonumber
\end{align}
where $f$ and $g$ are two functions satisfying some functional conditions (see Definition (\ref{def1})). The family of such couples of functions is large and particular choices for $f$ and $g$ give many of the known universal inequalities. For instance, in the case of the polyharmonic problem, if we take $f(x)=g(x)=(\lambda_{k+1}-x)^{2}$, then we obtain the Yang type inequality (\ref{8'}) proved by Cheng, Ichikawa and Mametsuka and when we take $f(x)=(g(x))^{2}=(\lambda_{k+1}-x)$, we obtain the Wu-Cao inequality (\ref{WuCao}). \\
On the other hand, we observe that by taking $T_{p}=[A,B_p]$, we obtain the following new formula (see Corollary \ref{first corollary}), where only one family of symmetric operators $B_{p}$ is needed

\begin{align}
\bigg[\sum_{i=1}^k \sum_{p=1}^n &f(\lambda_i)\langle[A,
B_p]u_i,B_{p}u_i\rangle\bigg]^2 \nonumber \\
{}\leq \bigg[\sum_{i=1}^k  \sum_{p=1}^n
g(\lambda_i)&\langle [A,B_p]u_i, B_p
u_i\rangle \bigg] \bigg[\sum_{i=1}^k \sum_{p=1}^n
\frac{\Big(f(\lambda_i)\Big)^2}{g(\lambda_i)(\lambda_{k+1}-\lambda_i)} \|\left[A,B_{p}\right]
u_i\|^2\bigg].
\end{align}
Using this last inequality, with particular choices of $f$ and $g$ as before, one can recover many of the known universal inequalities for eigenvalues of Laplace or Schr\"odinger operators. \\
\indent In the last section of this paper, we show how one can use the
inequality (\ref{main formula}) to derive new universal bounds, of Yang type,
for eigenvalues of the Kohn Laplacian on the Heisenberg group, with
any order. These bounds are stronger than the earlier bounds obtained by Niu and
Zhang in \cite{Niu-Zhang}. 
\section {\textbf{The abstract formulation}} 

Before stating the main result of this section, we introduce a special family of couples of functions which will play an important role in our formulation.
\begin{definition}\label{def1}
Let $\lambda >0$. A couple $(f,g)$ of functions defined on $]0,\lambda[$ belongs to $\Im_{\lambda}$ provided that  
\begin{itemize}
 \item[1.] $f$ and $g$ are positive,
\item[2.] $f$ and $g$ satisfy the following condition,\\ 
 for any $x,\, y \in ]0,\lambda[$ such that $x \neq y$,
\begin{equation}\label{cond}
\Big(\frac{f(x)-f(y)}{x-y}\Big)^2+\Big(\frac{\big(f(x)\big)^2}{g(x)(\lambda-x)}+\frac{\big(f(y)\big)^2}{g(y)(\lambda-y)}\Big)\Big(\frac{g(x)-g(y)}{x-y}\Big)\le 0.
\end{equation}
\end{itemize}
\end{definition}
A direct consequence of our definition is that $g$ must be nonincreasing.\\
If we multiply $f$ and $g$ of $\Im_{\lambda}$ by positive constants the resulting functions are also in $\Im_{\lambda}$. In the case where $f$ and $g$ are differentiable, one can easily deduce from (\ref{cond}) the following necessary condition:
\begin{equation*}
 \bigg[\big(\ln{f(x)}\big)'\bigg]^2 \le \frac{-2}{\lambda-x}\big(\ln{g(x)}\big)'.
\end{equation*}
This last condition helps us to find many couples $(f,g)$ satisfying the conditions 1) and 2) above. Among them, we mention
$\left\{\Big(1, (\lambda-x)^{\alpha}\Big)\, / \, \alpha \ge 0\right\}$,\; $\left\{\Big((\lambda-x),(\lambda-x)^{\beta}\Big)\, / \, \beta\ge \frac{1}{2}\right\}$, $\left\{\Big((\lambda-x)^{\delta},(\lambda-x)^{\delta}\Big)\, / \, 0<\delta \le 2\right\}$.\\ 
and  $\left\{\Big((\lambda-x)^{\alpha},(\lambda-x)^{\beta}\Big)\, / \, \alpha<0,\,1 \le \beta,\, {\rm and}\, \alpha^{2} \le \beta \right\}$ .\\

\indent Let $\mathcal{H}$ be a
complex Hilbert space with scalar product $\langle .,. \rangle$ and
corresponding norm $\|.\|$. For any two operators $A$ and $B$, we
denote by $[A,B]$ their commutator, defined by $[A,B]=AB-BA$. 
\begin{theorem}\label{first theorem}
Let $A$ : $\mathcal{D}\subset \mathcal{H}\longrightarrow
\mathcal{H}$ be a self-adjoint operator defined on a dense domain
$\mathcal{D}$, which is semibounded below and has a discrete
spectrum $\lambda_1 \leq \lambda_2\leq \lambda_3...$. 
Let $\{T_p:\mathcal{D}\longrightarrow \mathcal{H}\}_{p=1}^n$ be a collection of
skew-symmetric operators, and $\{B_p:T_p(\mathcal{D})\longrightarrow
\mathcal{H}\}_{p=1}^n$ be a collection of symmetric operators,
leaving $\mathcal{D}$ invariant. We denote by
$\left\{u_i\right\}_{i=1}^{\infty}$ a basis of orthonormal
eigenvectors of $A$, $u_{i}$ corresponding to $\lambda_i$. Let $k\ge 1$ and assume that $\lambda_{k+1}>\lambda_k$. 
Then, for any $(f,g)$ in $\Im_{\lambda_{k+1}}$  
\begin{align}\label{a}
\bigg( \sum_{i=1}^k
\sum_{p=1}^n &f(\lambda_i)\langle[T_p,
B_p]u_i,u_i\rangle \bigg)^2 \\
\le 4 \bigg(\sum_{i=1}^k\sum_{p=1}^n g(\lambda_i)& \langle
[A,B_p]u_i, B_p u_i\rangle \bigg) \bigg(\sum_{i=1}^k
\sum_{p=1}^n \frac{\big(f(\lambda_i)\big)^2}{g(\lambda_i)(\lambda_{k+1}-\lambda_i)}\|T_p u_i\|^2\bigg).\nonumber
\end{align}
\end{theorem}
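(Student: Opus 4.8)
\emph{Proof strategy.} The plan is to work in the orthonormal eigenbasis $\{u_i\}$ and, in every sum over an eigenvalue index $j$, to separate the contribution of the indices $j\le k$ from that of the indices $j>k$. For each $p$ put $b^{(p)}_{ij}=\langle B_pu_i,u_j\rangle$ and $t^{(p)}_{ij}=\langle T_pu_i,u_j\rangle$. Symmetry of $B_p$ and skew-symmetry of $T_p$ give $b^{(p)}_{ji}=\overline{b^{(p)}_{ij}}$ and $t^{(p)}_{ji}=-\overline{t^{(p)}_{ij}}$, and since $B_p$ leaves $\mathcal D$ invariant $AB_pu_i$ is defined; expanding in the basis and using $Au_i=\lambda_iu_i$ yields the three basic identities $\langle[T_p,B_p]u_i,u_i\rangle=-2\,\mathrm{Re}\sum_j b^{(p)}_{ij}\overline{t^{(p)}_{ij}}$, $\langle[A,B_p]u_i,B_pu_i\rangle=\sum_j(\lambda_j-\lambda_i)|b^{(p)}_{ij}|^2$ and $\|T_pu_i\|^2=\sum_j|t^{(p)}_{ij}|^2$. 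Writing $L$ for the left-hand side of (\ref{a}) and $R_1,R_2$ for the two factors on its right-hand side, and splitting the sum over $j$ in each of $R_1,R_2$ into $j\le k$ and $j>k$, I decompose $R_1=R_1^{<}+R_1^{>}$ and $R_2=R_2^{<}+R_2^{>}$, the superscript $<$ denoting the part carrying $j\le k$.

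The first step is to antisymmetrize the blocks in which all indices are $\le k$. Interchanging the summation indices $i$ and $j$ in the double sums over $\{1,\dots,k\}^2$ and using the above relations shows that
\[
\sum_{i,j\le k,\,p}\big(f(\lambda_i)+f(\lambda_j)\big)\,\mathrm{Re}\big(b^{(p)}_{ij}\overline{t^{(p)}_{ij}}\big)=0 ,
\]
so that
\[
L=-\sum_{i,j\le k,\,p}\big(f(\lambda_i)-f(\lambda_j)\big)\,\mathrm{Re}\big(b^{(p)}_{ij}\overline{t^{(p)}_{ij}}\big)\;-\;2\sum_{i\le k<j,\,p}f(\lambda_i)\,\mathrm{Re}\big(b^{(p)}_{ij}\overline{t^{(p)}_{ij}}\big).
\]
The same interchange gives $R_1^{<}=\tfrac12\sum_{i,j\le k,p}\big[-(g(\lambda_i)-g(\lambda_j))(\lambda_i-\lambda_j)\big]|b^{(p)}_{ij}|^2\ge 0$ — here the monotonicity of $g$, noted after Definition \ref{def1}, is what makes this sum nonnegative — and $R_2^{<}=\tfrac12\sum_{i,j\le k,p}\left(\frac{f(\lambda_i)^2}{g(\lambda_i)(\lambda_{k+1}-\lambda_i)}+\frac{f(\lambda_j)^2}{g(\lambda_j)(\lambda_{k+1}-\lambda_j)}\right)|t^{(p)}_{ij}|^2$.

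The key point is that multiplying the defining inequality (\ref{cond}) of $\Im_{\lambda_{k+1}}$ — taken at $x=\lambda_i$, $y=\lambda_j$ with $i\ne j$, both $\le k$ — by $(\lambda_i-\lambda_j)^2$ gives exactly
\[
\big(f(\lambda_i)-f(\lambda_j)\big)^2\le\left(\frac{f(\lambda_i)^2}{g(\lambda_i)(\lambda_{k+1}-\lambda_i)}+\frac{f(\lambda_j)^2}{g(\lambda_j)(\lambda_{k+1}-\lambda_j)}\right)\big[-(g(\lambda_i)-g(\lambda_j))(\lambda_i-\lambda_j)\big].
\]
Inserting this bound for $|f(\lambda_i)-f(\lambda_j)|$ in the first sum of $L$ and then applying Cauchy--Schwarz over $\{(i,j,p):i,j\le k\}$ bounds that sum, in absolute value, by $2\sqrt{R_1^{<}R_2^{<}}$. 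For the second sum of $L$ I would write $f(\lambda_i)=\sqrt{g(\lambda_i)(\lambda_{k+1}-\lambda_i)}\cdot\big(f(\lambda_i)/\sqrt{g(\lambda_i)(\lambda_{k+1}-\lambda_i)}\big)$ and apply Cauchy--Schwarz over $\{(i,j,p):i\le k<j\}$; since $\lambda_j\ge\lambda_{k+1}$ there, $\sum g(\lambda_i)(\lambda_{k+1}-\lambda_i)|b^{(p)}_{ij}|^2\le R_1^{>}$ while the complementary factor equals $R_2^{>}$, so that sum is at most $\sqrt{R_1^{>}R_2^{>}}$ in absolute value. Hence $|L|\le 2\sqrt{R_1^{<}R_2^{<}}+2\sqrt{R_1^{>}R_2^{>}}$, and a final Cauchy--Schwarz in $\R^2$ together with $R_1=R_1^{<}+R_1^{>}$ and $R_2=R_2^{<}+R_2^{>}$ gives $|L|\le 2\sqrt{R_1R_2}$, which is (\ref{a}).

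The three eigenbasis identities and the several Cauchy--Schwarz applications are routine to verify. The genuinely delicate point — and the only place the full strength of the hypotheses on $(f,g)$ is used — is the splitting into the $j\le k$ and $j>k$ parts, together with the observation that condition (\ref{cond}) is precisely the pointwise estimate that lets the ``resonant'' diagonal block be folded into the same Cauchy--Schwarz scheme as the off-block part; the nonnegativity of $R_1^{<}$ (equivalently, that $g$ is nonincreasing) is exactly what makes the closing two-term Cauchy--Schwarz legitimate.
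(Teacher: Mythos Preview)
Your proof is correct and takes a more direct route than the paper's. The paper introduces trial vectors $\phi_i^p=B_pu_i-\sum_{j\le k}a^p_{ij}u_j$, applies the Rayleigh--Ritz inequality for $\lambda_{k+1}$ to each $\phi_i^p$, combines it with a Young-type inequality carrying a free parameter $\alpha>0$, sums and symmetrizes, and arrives at a quadratic polynomial in $\alpha$ that is nonnegative for all real $\alpha$; the conclusion comes from nonpositivity of the discriminant, with a separate closing argument (via $T_p\mapsto -T_p$) that the leading coefficient $\sum g(\lambda_i)\langle[A,B_p]u_i,B_pu_i\rangle$ is nonnegative. You bypass both the free parameter and the discriminant: expanding everything in the eigenbasis, splitting the $j$-sums at $j=k$, and applying Cauchy--Schwarz once on each block and once in $\R^2$ gives $|L|\le 2\sqrt{R_1R_2}$ directly. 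Your decomposition also yields $R_1\ge 0$ as a byproduct ($R_1^{<}\ge 0$ from the monotonicity of $g$, $R_1^{>}\ge 0$ termwise), whereas the paper has to argue this at the end. The condition~(\ref{cond}) on $(f,g)$ enters at the same place in both arguments --- controlling the ``diagonal'' $i,j\le k$ block --- but your presentation makes clearer why that condition is exactly the right one. Conversely, the paper's Rayleigh--Ritz formulation extends more transparently to the setting (mentioned in their remark) where the spectrum of $A$ is not purely discrete.

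One small notational slip: you declare $L$ to be ``the left-hand side of (\ref{a})'', which is a square, but thereafter use $L$ for the unsquared sum $\sum f(\lambda_i)\langle[T_p,B_p]u_i,u_i\rangle$. The argument is unaffected, but you should adjust the definition.
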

\begin{proof}[Proof of Theorem \ref{first theorem}]
For each $i$, we consider the vectors $\phi_i^p$, given by 
$$\phi_i^p=B_pu_i-\sum_{j=1}^k a_{ij}^p u_j$$
 where $a_{ij}^p := \langle B_pu_i,u_j\rangle$, $p=1,...,n$. We have
\begin{equation}\label{b'}
\langle \phi_i^p, u_j \rangle =0,
\end{equation}
 for all  $j=1,...,k$. Taking $\phi_i^p$ as a trial vector in the
Rayleigh-Ritz ratio, we obtain
\begin{equation}\label{b}
\lambda_{k+1} \leq \frac{\langle A\phi_i^p,\phi_i^p \rangle}{\langle
\phi_i^p,\phi_i^p \rangle}.\end{equation}
Since $B_p$ is symmetric, for all $p=1,...,n$, we have
$a_{ij}^p=\overline{a_{ji}^p}$. Moreover, using the orthogonality conditions (\ref{b'}), we obtain
\begin{align}\label{c}
 \|\phi_i^p\|^2 & =\displaystyle \langle \phi_i^p,B_p u_i-\sum_{j=1}^k a_{ij}^p u_j \rangle= \langle \phi_i^p ,B_p u_i \rangle\nonumber\\
 & = \displaystyle \langle B_pu_i-\sum_{j=1}^k a_{ij}^p u_j,B_p u_i \rangle= \displaystyle \|B_pu_i\|^2-\sum_{j=1}^k a_{ij}^p \langle B_p u_i,u_j \rangle\nonumber\\
 & = \displaystyle \|B_p u_i\|^2-\sum_{j=1}^k\left|a_{ij}^p\right|^2
\end{align}
and
\begin{align}\label{d}
{}  \langle A\phi_i^p,\phi_i^p \rangle  & =  \displaystyle \langle AB_pu_i-\sum_{j=1}^k \lambda_j a_{ij}^p u_j, \phi_i^p \rangle \nonumber\\
{}  & = \langle AB_p u_i,\phi_i^p \rangle \nonumber\\
{}  & = \displaystyle\langle AB_p u_i, B_p u_i \rangle-\sum_{j=1}^k \overline{a_{ij}^p} \langle AB_p u_i,u_j \rangle \nonumber\\
{}  & = \displaystyle \langle [A,B_p]u_i,B_pu_i\rangle + \langle B_pAu_i,B_pu_i \rangle -\sum_{j=1}^k \lambda_j \left|a_{ij}^p\right|^2 \nonumber\\
{}  & = \displaystyle \langle [A,B_p]u_i,B_p u_i \rangle
+\lambda_i\| B_p u_i\| ^2 - \sum _{j=1}^k \lambda_j \left|a_{ij}^p\right|^2.
\end{align}
Hence, inequality (\ref{b}) reduces to
\begin{equation} \label{e}\displaystyle\lambda_{k+1}\|\phi_i^p\|^2  \leq
\displaystyle\langle[A,B_p]u_i, B_p u_i \rangle + \lambda_i\|B_p
     u_i\|^2- \sum_{j=1}^k \lambda_j \left|a_{ij}^p\right|^2.
\end{equation}
On the other hand, we observe that, for $p=1,
\cdots, n ,$
\begin{align}\label{11}
{} -2 \langle T_p u_i, \phi_i^p\rangle & = -2 \langle T_p u_i,B_p u_i \rangle +2 \langle T_p u_i, \sum_{j=1}^k a_{ij}^p u_j \rangle \nonumber\\
{} & =  2\langle u_i,T_p B_p u_i \rangle + 2 \sum_{j=1}^k \overline{a_{ij}^p} \langle T_p u_i, u_j \rangle  \nonumber\\
{} & = 2\langle u_i,T_p B_p u_i\rangle +2 \sum_{j=1}^k \overline{a_{ij}^p} c_{ij}^p,
\end{align}
where $c_{ij}^p=\langle T_pu_i,u_j \rangle$. \\
Note that, since $T_p$ is skew-symmetric, we have
$c_{ij}^p=- \overline{c_{ji}^p}$ for $1 \le p \le n$. \\
Therefore, using (\ref{b'}) and taking the real part of both sides of (\ref{11}), we
obtain, for any constant $\alpha_i>0$,
\begin{align}\label{g}
{}  2 Re\langle T_p B_p u_i, u_i \rangle + 2 \sum_{j=1}^k Re \left(\overline{a_{ij}^p} c_{ij}^p\right) & =  -2 Re \langle \phi_i^p, T_p u_i \rangle \nonumber\\
{}  & = 2 Re \langle \phi_i^p, -T_pu_i+ \sum_{j=1}^k c_{ij}^p u_j \rangle \nonumber \\
{}  & \leq  \alpha_i \|\phi_i^p\|^2 + \frac{1}{\alpha_i} \|-T_pu_i+\sum_{j=1}^k c_{ij}^p u_j\|^2 \nonumber \\
{} & =  \alpha_i \|\phi_i^p\|^2 +
\frac{1}{\alpha_i}\Big(\|T_pu_i\|^2-\sum_{j=1}^k\left|c_{ij}^p\right|^2\Big).
\end{align}
Multiplying (\ref{g}) by $f(\lambda_i)$ and taking $\displaystyle \alpha_i=\frac{\alpha(\lambda_{k+1}-\lambda_i)g(\lambda_i)}{f(\lambda_i)}$, where $\alpha$ is a positive constant and $i \le k$, we infer from
(\ref{e})
\begin{align}\label{h}
{}& 2f(\lambda_{i})  \bigg(Re \langle T_pB_p
u_i,u_i\rangle   +  \sum_{j=1}^k Re \left(\overline{a_{ij}^p} c_{ij}^p \right)\bigg) \nonumber\\
{}& \leq \alpha_if(\lambda_i)\|\phi_i^p\|^2 + \frac{1}{\alpha_i}
f(\lambda_i)\bigg(\|T_pu_i\|^2-\sum_{j=1}^k\left|c_{ij}^p\right|^2\bigg)\nonumber \\
{}&=\alpha (\lambda_{k+1}-\lambda_i)g(\lambda_i)\|\phi_i^p\|^2+\frac{1}{\alpha}\frac{\big(f(\lambda_i)\big)^2}{(\lambda_{k+1}-\lambda_i)g(\lambda_i)}\bigg(\|T_pu_i\|^2-\sum_{j=1}^k\left|c_{ij}^p\right|^2\bigg)\nonumber\\
{} & \leq \alpha g(\lambda_i) \langle [A,B_p] u_i, B_p u_i\rangle + \alpha g(\lambda_i) \lambda_{i}\|B_p u_i\|^2-\alpha g(\lambda_i) \sum_{j=1}^k
\lambda_j \left|a_{ij}^p\right|^2- \alpha g(\lambda_i)\lambda_i\|\phi_i^p\|^2 \nonumber \\
{}&+\frac{1}{\alpha}\frac{\big(f(\lambda_i)\big)^2}{(\lambda_{k+1}-\lambda_i)g(\lambda_i)}\bigg(\|T_p u_i\|^2-\sum_{j=1}^k {\left|c_{ij}^p\right|}^2 \bigg).
\end{align}
Summing over $i=1,\cdots,k$ and using (\ref{c}), we get
\begin{align}\label{k}
{} & 2\sum_{i=1}^k f(\lambda_i) Re \langle T_p B_p u_i,u_i\rangle+2\sum_{i,j=1}^k f(\lambda_i)Re\left(\overline{a_{ij}^p} c_{ij}^p\right)  \nonumber\\
{} & \leq \alpha \sum_{i=1}^kg(\lambda_i) \langle [A,B_p] u_i,B_p u_i \rangle + \alpha \sum_{i=1}^k \lambda_ig(\lambda_i) \|B_pu_i\|^2 \nonumber\\
{} &  -\alpha \sum _{i,j=1}^k \lambda_j g(\lambda_i) |a_{ij}^p|^2 - \alpha \sum_{i=1}^k \lambda_i g(\lambda_i)  \|\phi_i^p\|^2 \nonumber \\
{} & +\frac{1}{\alpha}\sum_{i=1}^k\frac{\big(f(\lambda_i)\big)^2}{(\lambda_{k+1}-\lambda_i)g(\lambda_i)}\bigg(\|T_pu_i\|^2-\sum_{j=1}^k
|c_{ij}^p|^2 \bigg)\nonumber\\
{} & = \alpha \sum_{i=1}^kg(\lambda_i) \langle [A,B_p] u_i,B_p u_i \rangle +\alpha \sum _{i,j=1}^k (\lambda_{i}-\lambda_j)g(\lambda_i) |a_{ij}^p|^2 \nonumber\\
{} & +\frac{1}{\alpha}\sum_{i=1}^k\frac{\big(f(\lambda_i)\big)^2}{(\lambda_{k+1}-\lambda_i)g(\lambda_i)}\bigg(\|T_pu_i\|^2-\sum_{j=1}^k|c_{ij}^p|^2 \bigg).
\end{align}
Since $a_{ij}^p=\overline{a_{ji}^p}$ and
$c_{ij}^p=-\overline{c_{ji}^p}$, we have 
\begin{align}\label{l}
2 Re \left(\sum_{i,j=1}^kf(\lambda_i)\overline{a_{ij}^p}c_{ij}^p\right) =Re\Big(\sum_{i,j=1}^k(f(\lambda_i)-f(\lambda_j))\overline{a_{ij}^p}c_{ij}^p\Big)
\end{align}
Using that $|c_{ij}^p|^{2}=|c_{ji}^p|^{2}$, we find
\begin{equation}\label{m}
-\frac{1}{\alpha}\sum_{i,j=1}^k\frac{\big(f(\lambda_i)\big)^2}{g(\lambda_i)(\lambda_{k+1}-\lambda_i)}|c_{ij}^p|^2=\frac{-1}{2\alpha}\sum_{i,j=1}^k\Big[\frac{\big(f(\lambda_i)\big)^2}{g(\lambda_i)(\lambda_{k+1}-\lambda_i)}+\frac{\big(f(\lambda_j)\big)^2}{g(\lambda_j)(\lambda_{k+1}-\lambda_j)}\Big]|c_{ij}^p|^2.
\end{equation}
Moreover, 
\begin{align}\label{n}
\displaystyle \alpha \sum_{i,j=1}^k g(\lambda_i)(\lambda_i-\lambda_j)|a_{ij}^p|^2
 =\displaystyle \frac{\alpha}{2} \sum_{i,j=1}^k \Big(g(\lambda_i)-g(\lambda_j)\Big)(\lambda_i-\lambda_j)|a_{ij}^p|^2. 
\end{align}
Thus we infer from (\ref{k}),(\ref{l}),(\ref{m}) and
(\ref{n})
\begin{align}\label{o}
{} & 2 \sum_{i=1}^k f(\lambda_i) Re \langle T_pB_p u_i, u_i \rangle +\sum_{i,j=1}^k\Big(f(\lambda_i)-f(\lambda_j)\Big)Re\left(\overline{a_{ij}^p} c_{ij}^p \right)\nonumber \\
{} & \leq  \alpha\sum_{i=1}^k g(\lambda_i)\langle [A,B_p] u_i, B_p u_i\rangle +\frac{1}{\alpha} \sum_{i=1}^k \frac{\big(f(\lambda_i)\big)^2}{g(\lambda_i)(\lambda_{k+1}-\lambda_i)}\|T_p u_i\|^2 \nonumber\\
{} & +\frac{\alpha}{2} \sum_{i,j=1}^k \Big(g(\lambda_i)-g(\lambda_j)\Big)(\lambda_i-\lambda_j)|a_{ij}^p|^2 \nonumber\\ {}&-\frac{1}{2\alpha}\sum_{i,j=1}^k
\Big[\frac{\big(f(\lambda_i)\big)^2}{g(\lambda_i)(\lambda_{k+1}-\lambda_i)}+\frac{\big(f(\lambda_j)\big)^2}{g(\lambda_j)(\lambda_{k+1}-\lambda_j)}\Big]|c_{ij}^p|^2.
\end{align}
But
\begin{align}
 \sum_{i,j=1}^k\Big(f(\lambda_j)-f(\lambda_i)\Big)&Re\left(\overline{a_{ij}^p}c_{ij}\right) \leq  \frac{\alpha}{2}\sum_{i,j=1}^k \Big(g(\lambda_j)-g(\lambda_i)\Big)(\lambda_i-\lambda_j)|a_{ij}^p|^2\nonumber\\
&+\frac{1}{2\alpha}\sum_{i,j=1}^k\frac{\Big(f(\lambda_j)-f(\lambda_i)\Big)^2}{(\lambda_i-\lambda_j)^2}\frac{\lambda_i-\lambda_j}{g(\lambda_j)-g(\lambda_i)}|c_{ij}^p|^2.
\end{align}
From the condition (\ref{cond}) satisfied by $f$ and $g$, we infer
\begin{align}\label{p}
{}  \displaystyle \sum_{i=1}^k \Big(f(\lambda_j)-&f(\lambda_i)\Big)Re\left(\overline{a_{ij}^p}c_{ij}^p\right) \leq  \displaystyle \frac{\alpha}{2} \sum_{i,j=1}^k \Big(g(\lambda_j)-g(\lambda_i)\Big)(\lambda_i-\lambda_j)|a_{ij}^p|^2 \nonumber \\
{}  & \displaystyle
+\frac{1}{2\alpha}\sum_{i,j=1}^k\bigg(\frac{(f(\lambda_i))^2}{g(\lambda_i)(\lambda_{k+1}-\lambda_i)}+\frac{(f(\lambda_j))^2}{g(\lambda_j)(\lambda_{k+1}-\lambda_j)}\bigg)|c_{ij}^p|^2.
\end{align}
Hence, taking sum on $p$, from 1 to $n$, in (\ref{o}), we find
\begin{align}\label{q}
{}  2\sum_{i=1}^k &\sum_{p=1}^n f(\lambda_i) Re\langle T_pB_p u_i, u_i \rangle \nonumber \\
{}  \leq \alpha \sum_{i=1}^k
\sum_{p=1}^ng(\lambda_i)& \langle [A,B_p] u_i,
B_p u_i
\rangle+\frac{1}{\alpha}\sum_{i=1}^k\sum_{p=1}^n\frac{\big(f(\lambda_i)\big)^2}{(\lambda_{k+1}-\lambda_i)g(\lambda_i)}\|T_p u_i\|^2.
\end{align}
Since $B_p$ is symmetric and $T_p$ is skew-symmetric, we have for all $p \le n$,
\begin{align*}
2 Re \langle T_pB_p u_i,u_i\rangle & =\langle T_pB_p u_i,u_i\rangle+\overline{\langle T_pB_p u_i,u_i\rangle}\\
& =\langle T_pB_p u_i,u_i\rangle-\overline{\langle u_{i},B_p T_p u_i\rangle}\\
& =\langle [T_p,B_p]u_i,u_i\rangle
\end{align*}
and inequality (\ref{q}) becomes
\begin{align}\label{r}
{}  \displaystyle
\sum_{i=1}^k&\sum_{p=1}^nf(\lambda_i)\langle
[T_p,B_p]u_i,u_i \rangle \nonumber \\
{} \displaystyle \leq \alpha \sum_{i=1}^k \sum_{p=1}^n
g(\lambda_i) &\langle[A,B_p]u_i,B_p
u_i\rangle + \frac{1}{\alpha}\sum_{i=1}^k \sum_{p=1}^n\frac{\big(f(\lambda_i)\big)^2}{(\lambda_{k+1}-\lambda_i)g(\lambda_i)}
\|T_pu_i\|^2,
\end{align}
or equivalently
\begin{align}\label{s}
{} \displaystyle \alpha^2 \sum_{i=1}^k
\sum_{p=1}^ng(\lambda_i) &\langle
[A,B_p]u_i,B_p u_i \rangle \nonumber \\
{}  -\alpha \sum_{i=1}^k\sum_{p=1}^n
f(\lambda_i)\langle [T_p,B_p]u_i,u_i \rangle +&
\sum_{i=1}^k
\sum_{p=1}^n\frac{\big(f(\lambda_i)\big)^2}{(\lambda_{k+1}-\lambda_i)g(\lambda_i)}\|T_pu_i\|^2 \geq
0.
\end{align}
To prove inequality (\ref{a}), it suffices to show that
\begin{equation}\label{positive}
\sum_{i=1}^k\sum_{p=1}^ng(\lambda_i)\langle[A,B_p]u_i,
B_p u_i\rangle \ge 0 .
\end{equation}
In fact, if this is the case, the discriminant of the quadratic
polynomial (\ref{s}) must be nonpositive, i.e.
\begin{align}\label{disc}
{}  \bigg(\sum_{i=1}^k \sum_{p=1}^n&f(\lambda_i)\langle[T_p,B_p]u_i,u_i \rangle \bigg)^2\nonumber \\
{} -4\bigg(\sum_{i=1}^k
\sum_{p=1}^ng(\lambda_i)&\langle[A,B_p]u_i,B_p
u_i\rangle\bigg)\bigg(\sum_{i=1}^k\sum_{p=1}^n\frac{\Big(f(\lambda_i)\Big)^2}{(\lambda_{k+1}-\lambda_i)g(\lambda_i)}\|T_pu_i\|^2\bigg)\leq
0.
\end{align}
which yields the theorem. We note that if we replace $T_p$ by $-T_p$, inequality (\ref{s}) holds. Thus we can deduce that it holds for all real $\alpha$ and not only $\alpha >0$ proving that the coefficient of the quadratic term, i.e. $\sum_{i=1}^k\sum_{p=1}^ng(\lambda_i)\langle[A,B_p]u_i,
B_p u_i\rangle$,  is nonnegative. If it is equal to zero, then $\sum_{i=1}^k \sum_{p=1}^nf(\lambda_i)\langle[T_p,B_p]u_i,u_i \rangle$ is also equal to 0 and the theorem trivially holds.
\end{proof}
\begin{remark}
\begin{itemize}
\item In the definition of $\Im_{\lambda}$, the functions $f$ and $g$ can be defined only on a discrete set of eigenvalues. 
\item One can formulate Theorem \ref{first theorem} as in \cite{HarlStub} for $z \in ]\lambda_{k},\lambda_{k+1}]$ (it suffices to replace, in the hypothesis and in the inequality, $\lambda_{k+1}$ by $z$). 
 \item The result of Theorem \ref{first theorem} can also be stated, as in \cite {HarlStub} or \cite{HarlStub2}, in the general situation where the spectrum of $A$ is not purely discrete and its point spectrum is nonempty.
\item Taking $f=g=1$ in (\ref{a}), we obtain inequality (\ref{Ashb-Her}) of Ashbaugh and Hermi.
\end{itemize}
\end{remark}
If the operators $T_{p}$ are chosen such that
$T_{p}=\left[A,B_{p}\right]$, then
$\left[T_{p},B_{p}\right]=\left[\left[A,B_{p}\right],B_{p}\right]$.
Applying Theorem \ref{first theorem} in this context and using the
obvious identity $\langle
\left[A,B_{p}\right]u_{i},B_{p}u_{i}\rangle=-\frac{1}{2} \langle
\left[\left[A,B_{p}\right],B_{p}\right]u_{i},u_{i}\rangle$, we
obtain
\begin{corollary}\label{first corollary}
 Let $A$ : $\mathcal{D}\subset \mathcal{H}\longrightarrow
\mathcal{H}$ be a self-adjoint operator defined on a dense domain
$\mathcal{D}$, which is semibounded below and has a discrete
spectrum $\lambda_1 \leq \lambda_2\leq \lambda_3...$. 
Let $\{B_p:A(\mathcal{D})\longrightarrow \mathcal{H}\}_{p=1}^n$ be a
collection of symmetric operators, leaving $\mathcal{D}$ invariant. We denote by $\left\{u_i\right\}_{i=1}^{\infty}$ a basis of orthonormal
eigenvectors of $A$, $u_{i}$ corresponding to $\lambda_i$. If for $k\ge 1$ we have $\lambda_{k+1}>\lambda_k$,  
then for any $(f,g) \in \Im_k $, 
\begin{align} \label {algebraic ineq}
\bigg[\sum_{i=1}^k \sum_{p=1}^n &f(\lambda_i)\langle[A,
B_p]u_i,B_{p}u_i\rangle\bigg]^2 \nonumber \\
{}\leq \bigg[\sum_{i=1}^k  \sum_{p=1}^n
g(\lambda_i)&\langle [A,B_p]u_i, B_p
u_i\rangle \bigg] \bigg[\sum_{i=1}^k \sum_{p=1}^n
\frac{\Big(f(\lambda_i)\Big)^2}{g(\lambda_i)(\lambda_{k+1}-\lambda_i)} \|\left[A,B_{p}\right]
u_i\|^2\bigg].
\end{align}
\end{corollary}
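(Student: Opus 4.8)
The plan is to derive Corollary \ref{first corollary} directly from Theorem \ref{first theorem} by the special choice $T_p := [A, B_p]$; once this choice is seen to be admissible, the statement follows by rewriting each of the three sums occurring in inequality (\ref{a}).

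The first step is to verify that $T_p = [A, B_p]$ satisfies the hypotheses of Theorem \ref{first theorem}. Because $A$ is self-adjoint and $B_p$ is symmetric, one has (formally) $[A, B_p]^{\ast} = (AB_p - B_p A)^{\ast} = B_p A - A B_p = -[A, B_p]$, so each $T_p$ is skew-symmetric. The assumption that $B_p$ is defined on $A(\mathcal{D})$ and leaves $\mathcal{D}$ invariant is exactly what is needed for $AB_p$, $B_p A$, and hence $T_p = [A, B_p]$, to be defined on $\mathcal{D}$, for $B_p$ to be applicable to $T_p(\mathcal{D})$, and for the double commutator $[T_p, B_p] = [[A, B_p], B_p]$ to be defined on $\mathcal{D}$. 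Thus every term appearing in (\ref{a}) is meaningful for this choice of the $T_p$'s.

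The second step records the two elementary identities that collapse the sums. Setting $C_p := [A, B_p]$, which is skew-symmetric, and using that $B_p$ is symmetric, a one-line computation gives $\langle [C_p, B_p] u_i, u_i\rangle = -2\,\mathrm{Re}\,\langle C_p u_i, B_p u_i\rangle$ for every eigenvector $u_i$. On the other hand, the evaluation of $\langle A\phi_i^p, \phi_i^p\rangle$ performed in the proof of Theorem \ref{first theorem} (equation (\ref{d})) displays $\langle [A, B_p] u_i, B_p u_i\rangle$ as a difference of real numbers, so it is real; hence
\[
\langle [T_p, B_p] u_i, u_i\rangle \;=\; -2\,\langle [A, B_p] u_i, B_p u_i\rangle ,
\]
which is precisely the identity $\langle [A, B_p] u_i, B_p u_i\rangle = -\tfrac{1}{2}\langle [[A, B_p], B_p] u_i, u_i\rangle$ quoted just before the corollary. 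Finally $\|T_p u_i\|^{2} = \|[A, B_p] u_i\|^{2}$ trivially.

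The last step is to substitute these relations into inequality (\ref{a}). The left-hand side becomes $4\big(\sum_{i=1}^{k}\sum_{p=1}^{n} f(\lambda_i)\langle [A, B_p] u_i, B_p u_i\rangle\big)^{2}$; on the right-hand side the first factor is unchanged, while the second factor becomes $\sum_{i=1}^{k}\sum_{p=1}^{n} \frac{(f(\lambda_i))^{2}}{g(\lambda_i)(\lambda_{k+1}-\lambda_i)}\|[A, B_p] u_i\|^{2}$. Cancelling the common factor $4$ then gives exactly (\ref{algebraic ineq}). No step here presents a genuine obstacle; the only points demanding a little care are the domain/self-adjointness bookkeeping in the first step and the brief verification that $\langle [A, B_p] u_i, B_p u_i\rangle$ is real, which is what upgrades the real part in the first identity to an exact coefficient $-\tfrac{1}{2}$.
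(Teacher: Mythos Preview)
Your proof is correct and follows essentially the same approach as the paper: both take $T_p=[A,B_p]$ in Theorem~\ref{first theorem} and use the identity $\langle [A,B_p]u_i,B_pu_i\rangle=-\tfrac12\langle [[A,B_p],B_p]u_i,u_i\rangle$ to pass from (\ref{a}) to (\ref{algebraic ineq}). You simply add a bit more detail on domain bookkeeping and on why $\langle [A,B_p]u_i,B_pu_i\rangle$ is real, but the argument is the same.
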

\begin{remark}
\begin{itemize}
\item As for Theorem \ref{first theorem}, Corollary \ref{first corollary} can be stated in the general case where the spectrum of $A$ is not totally discrete. 
\item For $f(x)=g(x)=(\lambda_{k+1}-x)^{2}$, inequality (\ref{algebraic ineq}) becomes the abstract inequality which gives the Yang type inequalities for Laplacians and Schr\"odinger operators (see \cite{AshbHer1}, \cite{Soufi.Harl.Ilias}, \cite{HarlStub} and \cite{LevPar}).
\item For $f(x)=g(x)=(\lambda_{k+1}-x)^{\alpha}, {\rm with}\; \alpha \le 2$, we recover a Harrell and Stubbe inequality (\cite {HarlStub}, \cite{AshbHer3})).
\item We can easily deduce from the inequality (\ref {algebraic ineq}) new universal inequalities in many different geometric situations (Dirichlet Laplacian on domains of Submanifolds of Euclidean (or symmetric) spaces as in \cite{Soufi.Harl.Ilias}, Hodge de Rham Laplacian or the square of a Dirac operator,and more generally a Laplacian acting on sections of a Riemannian vector bundle on a submanifold of a Euclidean (or symmetric) space). 
\end{itemize}
\end{remark}
\section {\textbf{Application to the polyharmonic operators}}
In this section, using  Theorem \ref{first theorem}, we will show how to derive universal inequalities for the eigenvalues of a polyharmonic problem.  For a power of the Laplacian and with a particular choice of $f$ and $g$, one can derive inequality (\ref{WuCao}) and inequality (\ref {8'}). \\
In fact, throughout this section we
assume that $A=Q^l$, such that $Q$ is a symmetric self-adjoint
operator given by \begin{equation*} \displaystyle Q=-\sum_{p=1}^n
T_p^2, \end{equation*} where $T_p$ are skew-symmetric operators for
$p=1,...,n$, with $[Q,T_p]=0$ and $[T_m,B_p]=\delta_{mp}$. \\
First we need
to calculate the following expressions
\begin{equation}\label{exp1}
 \sum_{p=1}^n \langle [A,B_p]u_i,B_p u_i \rangle
\end{equation}
and
\begin{equation}\label{exp2}
 \sum_{p=1}^n \langle T_p u_i, T_p u_i \rangle= \langle Q u_i,u_i \rangle.
\end{equation}
For this purpose the following two results of Hook (see proposition 3 in \cite{Hook1} and Theorem 1 in \cite{Hook2}) will be useful. \\
The first one is
\begin{lemma}\label{Hook1}
Under the circumstances stated above, we have
\begin{equation*}
[A,B_p]=[Q^l,B_p]=-2lQ^{l-1}T_p
\end{equation*}
and
\begin{equation*}
\sum_{p=1}^n[B_p,[A,B_p]]=2l(2l+n-2)Q^{l-1}.
\end{equation*}
\end{lemma}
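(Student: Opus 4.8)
The plan is to deduce both assertions from a single elementary computation, the first‑order bracket $[Q,B_p]$, and then to propagate it through the powers of $Q$ using the Leibniz rule $[X,YZ]=[X,Y]Z+Y[X,Z]$ together with the standing hypothesis $[Q,T_p]=0$. No trial functions or spectral theory are needed here; the whole lemma is a formal commutator manipulation.

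First I would compute $[Q,B_p]$. Since $Q=-\sum_{m=1}^{n}T_m^{2}$ and $[T_m,B_p]=\delta_{mp}\,\mathrm{Id}$, the Leibniz rule gives $[T_m^{2},B_p]=T_m[T_m,B_p]+[T_m,B_p]T_m=2\delta_{mp}T_m$, hence $[Q,B_p]=-2T_p$. For the first identity I would then write $[Q^{l},B_p]$ as a telescoping sum, $[Q^{l},B_p]=\sum_{j=0}^{l-1}Q^{j}[Q,B_p]Q^{l-1-j}=-2\sum_{j=0}^{l-1}Q^{j}T_pQ^{l-1-j}$; because $[Q,T_p]=0$, every summand equals $Q^{l-1}T_p$, and there are $l$ of them, so $[A,B_p]=[Q^{l},B_p]=-2lQ^{l-1}T_p$. (Equivalently one can argue by induction on $l$ via $[Q^{l},B_p]=Q[Q^{l-1},B_p]+[Q,B_p]Q^{l-1}$, again using $[Q,T_p]=0$.)

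For the second identity I would substitute the first one: $\sum_{p}[B_p,[A,B_p]]=-2l\sum_{p}[B_p,Q^{l-1}T_p]$. Expanding by Leibniz, $[B_p,Q^{l-1}T_p]=[B_p,Q^{l-1}]T_p+Q^{l-1}[B_p,T_p]$; by the first identity applied with $l-1$ in place of $l$, $[B_p,Q^{l-1}]=-[Q^{l-1},B_p]=2(l-1)Q^{l-2}T_p$, while $[B_p,T_p]=-\delta_{pp}\,\mathrm{Id}=-\mathrm{Id}$. Hence $[B_p,Q^{l-1}T_p]=2(l-1)Q^{l-2}T_p^{2}-Q^{l-1}$, and summing over $p$ while using $\sum_{p}T_p^{2}=-Q$ gives $\sum_{p}[B_p,Q^{l-1}T_p]=-2(l-1)Q^{l-1}-nQ^{l-1}=-(2l+n-2)Q^{l-1}$; multiplying by $-2l$ yields $\sum_{p}[B_p,[A,B_p]]=2l(2l+n-2)Q^{l-1}$.

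I do not expect a real obstacle; the argument is purely algebraic. The points that deserve a little care are: invoking $[Q,T_p]=0$ at exactly the moment the telescoping sum is collapsed (without it the intermediate powers $Q^{j}T_pQ^{l-1-j}$ would not combine into $Q^{l-1}T_p$); reading $\delta_{mp}$ consistently as a scalar multiple of the identity operator; the harmless degenerate case $l=1$, where the coefficient $2(l-1)$ annihilates the formally troublesome $Q^{l-2}$; and checking that all the compositions above are legitimate on the common invariant domain $\mathcal{D}$, which is guaranteed by the assumptions in force throughout this section.
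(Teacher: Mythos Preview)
Your argument is correct: the first-order bracket $[Q,B_p]=-2T_p$ is right, the telescoping expansion of $[Q^l,B_p]$ collapses as you say thanks to $[Q,T_p]=0$, and the second identity follows cleanly from the Leibniz expansion and $\sum_p T_p^2=-Q$. The paper itself does not supply a proof of this lemma; it is quoted verbatim from Hook \cite{Hook1} (Proposition~3), so there is nothing to compare against --- your self-contained algebraic derivation is exactly the kind of proof one would expect and is essentially the standard one.
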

And the second one is the following
\begin{theorem}\label{Hook2}
Let $V$ be a real or complex inner product space with inner product
$\langle.,.\rangle$. Let $D$ be a linear submanifold of $V$ and let
$Q:D\longrightarrow V$ be a linear operator in $V$. Suppose $l$ is a
positive integer and $u$ is a fixed vector such that for all $0 \leq
r \leq q \leq l$,
\begin{equation*}
 \arrowvert \langle Q^q u, u \rangle \arrowvert = \arrowvert \langle Q^{q-r} u, Q^r u \rangle \arrowvert.
\end{equation*}
Then, for all integers $0 \leq r \leq q \leq l$, when $q$ is even,
we have
\begin{equation}\label {hok}
 \arrowvert \langle Q^r u,u \rangle \arrowvert \leq \arrowvert \langle Q^qu,u \rangle \arrowvert^{r/q} \langle u,u \rangle^{1-r/q}.
\end{equation}
This inequality is satisfied for $q$ odd and $0 \leq r \leq q \leq
l$, if in addition to the above, there is a family of operators
$\{T_p\}_{p=1}^n$ such that
\begin{equation*}
 \arrowvert \langle Q^q u,u \rangle \arrowvert=\arrowvert \sum_{p=1}^n \langle T_p Q^{q-r}u,T_p Q^{r-1}u \rangle\arrowvert
\end{equation*}
holds for all $0 \leq r \leq q \leq l$.
\end{theorem}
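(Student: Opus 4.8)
The plan is to reduce the statement to the log-convexity of the finite nonnegative sequence $a_j:=|\langle Q^ju,u\rangle|$, $0\le j\le l$, together with the elementary fact that a finite nonnegative sequence $(c_0,\dots,c_N)$ satisfying $c_i^2\le c_{i-1}c_{i+1}$ at every interior index satisfies $c_i^{N}\le c_0^{N-i}c_N^{i}$ for $0\le i\le N$ (an easy induction, which also covers the cases where some $c_i$ vanishes). Since $a_0=\langle u,u\rangle$, applying this fact to a suitable subsequence of $(a_j)$ is exactly (\ref{hok}); the case $u=0$ being trivial, we may assume $u\neq0$. The structural input used repeatedly is that, under the standing hypothesis, $|\langle Q^ju,Q^ku\rangle|$ depends only on $j+k$: taking $q=j+k$ and $r=k$ in the hypothesis gives $|\langle Q^ju,Q^ku\rangle|=a_{j+k}$ for all $j,k\ge0$ with $j+k\le l$. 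In particular $\|Q^mu\|^2=a_{2m}$, so $\{Q^ju\}$ is a ``moment sequence'' and Cauchy--Schwarz gives $a_{j+k}=|\langle Q^ju,Q^ku\rangle|\le\|Q^ju\|\,\|Q^ku\|=a_{2j}^{1/2}a_{2k}^{1/2}$. Taking $(j,k)=(m-1,m+1)$ this reads $a_{2m}^2\le a_{2m-2}\,a_{2m+2}$, i.e.\ the even subsequence $a_0,a_2,a_4,\dots$ is log-convex, and taking $(j,k)=(m,m+1)$ it gives the ``cross'' bound $a_{2m+1}\le a_{2m}^{1/2}a_{2m+2}^{1/2}$.

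Assume first that $q=2M$ is even. Applying the induction fact to the log-convex even subsequence gives $a_{2m}\le a_0^{1-m/M}a_{2M}^{m/M}$ for $0\le m\le M$, which is (\ref{hok}) for even $r=2m$, since $m/M=r/q$. For odd $r=2m+1$ with $0<r<q$, the cross bound combined with the $r=2m$ and $r=2m+2$ instances just proved gives
\[
a_{2m+1}\le a_{2m}^{1/2}a_{2m+2}^{1/2}\le\big(a_0^{1-m/M}a_{2M}^{m/M}\big)^{1/2}\big(a_0^{1-(m+1)/M}a_{2M}^{(m+1)/M}\big)^{1/2},
\]
and the right-hand side equals $a_0^{1-(2m+1)/(2M)}a_{2M}^{(2m+1)/(2M)}$, i.e.\ (\ref{hok}) for $r=2m+1$. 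Thus the even case is settled using only the original hypothesis, all powers of $Q$ appearing having order $\le q\le l$.

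Now assume $q$ is odd; here the obstacle is that $\langle Q^qu,u\rangle$ is not visibly a squared norm, and the original hypothesis, via the moment-sequence structure of $\{Q^ju\}$, produces only $a_{2m}^2\le a_{2m-2}a_{2m+2}$ and the cross bounds, which does not suffice to interleave the even and odd subsequences. This is precisely where the extra hypothesis on $\{T_p\}$ enters: set $V_j:=(T_1Q^ju,\dots,T_nQ^ju)$, viewed in $V^n$ with its natural inner product, so that $\langle V_j,V_k\rangle=\sum_{p=1}^n\langle T_pQ^ju,T_pQ^ku\rangle$. The extra hypothesis, read with $q$ replaced by $j+k+1$ and $r$ by $k+1$, states exactly that $|\langle V_j,V_k\rangle|=|\langle Q^{j+k+1}u,u\rangle|=a_{j+k+1}$ whenever $j+k+1\le l$; in particular $\|V_m\|^2=a_{2m+1}$, so $\{V_j\}$ is again a moment sequence, and Cauchy--Schwarz for $(j,k)=(m-1,m)$ supplies the missing inequality
\[
a_{2m}^2=|\langle V_{m-1},V_m\rangle|^2\le\|V_{m-1}\|^2\,\|V_m\|^2=a_{2m-1}\,a_{2m+1}.
\]
Together with the cross bound $a_{2m+1}^2\le a_{2m}a_{2m+2}$ (from the original hypothesis), this yields $a_j^2\le a_{j-1}a_{j+1}$ at every interior index $1\le j\le l-1$, so the whole sequence $(a_j)_{0\le j\le l}$ is log-convex and the induction fact delivers (\ref{hok}) for all $0\le r\le q\le l$. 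I expect the only genuinely fiddly point to be the bookkeeping — checking that in each appeal to the two hypotheses the powers of $Q$ involved have order at most $q\le l$ — but since every index entering these estimates is at most $q$, this is automatic.
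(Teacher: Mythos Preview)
The paper does not give its own proof of this statement; it merely quotes it as a result of Hook (Theorem~1 of \cite{Hook2}) and then applies it. So there is no ``paper's proof'' to compare against.

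Your argument is correct. The reduction to log-convexity of the moment sequence $a_j=|\langle Q^ju,u\rangle|$ is exactly the right framework: the first hypothesis lets you write $a_{j+k}=|\langle Q^ju,Q^ku\rangle|$ and hence $\|Q^mu\|^2=a_{2m}$, so Cauchy--Schwarz gives log-convexity of the even subsequence together with the cross bound $a_{2m+1}^2\le a_{2m}a_{2m+2}$, which already suffices for even $q$. For odd $q$ the extra hypothesis, read as $|\langle V_j,V_k\rangle|=a_{j+k+1}$ with $V_j=(T_pQ^ju)_p\in V^n$, supplies the missing inequality $a_{2m}^2\le a_{2m-1}a_{2m+1}$ via Cauchy--Schwarz in $V^n$, completing log-convexity of the full sequence. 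The index bookkeeping is fine: in every application the largest power of $Q$ (or index of $a$) occurring is at most $q\le l$, as you observe. The treatment of the degenerate case where some $a_j$ vanishes is also handled correctly by your remark on the elementary log-convexity lemma (if any interior term vanishes, the inequalities force all earlier terms, in particular $a_0$, to vanish, and then all $a_j$ with $j<q$ vanish as well). This is essentially the standard route to such ``generalized Cauchy--Schwarz'' inequalities and is almost certainly what Hook does.
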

Applying Lemma \ref{Hook1}, we obtain
\begin{align*}
 \sum_{p=1}^n \langle [A,B_p]u_i, B_p u_i \rangle & =\frac{1}{2}\sum_{p=1}^n \langle [B_p,[A,B_p]]u_i,u_i\rangle \\
 & = l(2l+n-2) \langle Q^{l-1}u_i,u_i \rangle.
\end{align*}
Therefore, if $l$ is odd, then we have
\begin{equation*}
 \sum_{p=1}^n \langle [A,B_p]u_i,B_p u_i \rangle=l(2l+n-2)\|Q^{\frac{l-1}{2}}u_i\|^2
\end{equation*}
and if $l$ is even, then
\begin{equation*}
 \sum_{p=1}^n \langle [A,B_p]u_i,B_p u_i \rangle = l(2l+n-2)\sum_{p=1}^n\|T_p Q^{\frac{l-2}{2}}u_i\|^2
\end{equation*}
The conditions of Theorem \ref{Hook2} are satisfied by our
operator $Q$. So inequality (\ref{hok}) is valid for all $0 \le r \le q \le l$ without parity condition on $q$. Applying this inequality (\ref{hok}) with $r=l-1$ and $q=l$, we obtain
\begin{align} \label{5}
\displaystyle \sum_{p=1}^n \langle [A,B_p]u_i,B_p u_i\rangle & = l(2l+n-2) \langle Q^{l-1}u_i,u_i \rangle \nonumber \\
 &\le l(2l+n-2)\langle Q^l u_i,u_i \rangle ^{\frac{l-1}{l}} \langle u_i,u_i \rangle ^{1-\frac{l-1}{l}}\nonumber\\
 &= l(2l+n-2)\lambda_i^{\frac{l-1}{l}} 
\end{align}
and with $r=1$ and $q=l$, we obtain
\begin{align}\label{6}
\displaystyle \sum_{p=1}^n \|T_p u_i\|^2 =\langle Q u_{i},u_{i} \rangle \le \langle Q^{l} u_{i},u_{i} \rangle^{\frac{1}{l}}\langle u_{i},u_{i} \rangle ^{1-\frac{1}{l}} \leq \lambda_i^{\frac{1}{l}}.
\end{align}
Since $[T_p,B_p]=1$, one gets
\begin{equation}\label{8}
 \langle [T_p,B_p]u_i,u_i \rangle=1.
\end{equation}
Then using inequalities (\ref{5}), (\ref{6}) and (\ref{8}) together  with inequality (\ref{a}), we obtain
\begin{align*}
 {} n^2 \bigg[\sum_{i=1}^k f(\lambda_i)\bigg]^2  \le 
4l(2l+n-2) \bigg(\sum_{i=1}^k g(\lambda_i)\lambda_i^{\frac{l-1}{l}}\bigg)\bigg(\sum_{i=1}^k
\frac{\Big(f(\lambda_i)\Big)^2}{g(\lambda_i)(\lambda_{k+1}-\lambda_i)}\lambda_i^{\frac{1}{l}}\bigg)
\end{align*} 
or equivalently
\begin{align} \label{poly}
{}\sum_{i=1}^k f(\lambda_i) \leq
 \frac{2}{n} \sqrt{l(2l+n-2)} \bigg(\sum_{i=1}^k g(\lambda_i)\lambda_i ^{\frac{l-1}{l}}\bigg)^{\frac{1}{2}} \bigg(\displaystyle
\sum_{i=1}^k\frac{\Big(f(\lambda_i)\Big)^2}{g(\lambda_i)(\lambda_{k+1}-\lambda_i)}\lambda_i^{\frac{1}{l}}\bigg)^{\frac{1}{2}}.
\end{align}
Now the operators $A=(-\Delta)^l$, $Q=-\Delta$, $B_p=x_p$,
$p=1,\ldots,n$, where $x_1,\ldots,x_n$ are Euclidean coordinates,
and $T_p=\frac{\partial}{\partial x_p}$ fit the setup of this section. Thus, taking $f(x)=\Big(g(x)\Big)^2=(\lambda_{k+1}-x)$,  we can obtain inequality
(\ref{WuCao}) of Wu and Cao.
\begin{remark}
For the special case $l=2$ (i.e the clamped plate problem) and the same values of $f$ and $g$ as above, we obtain inequality (\ref{4}) of Cheng and Yang. We observe that this inequality can also be obtained easily by a simple calculation from our inequality (\ref {a}). In fact, taking $A=\Delta^2$, $B_p=x_p$, $p=1,\ldots,n$ and $T_p=\frac{\partial}{\partial x_p}$, we first observe that $[T_p,B_p]=1$. Hence, we have
\begin{equation}\label{c1}
\sum_{p=1}^n \langle [T_p,B_p]u_i,u_i \rangle=n,
\end{equation}
moreover
\begin{equation*}
[A,B_p]u_i=[\Delta^2,x_p]u_i=4\frac{\partial}{\partial x_p}\Delta
u_i.
\end{equation*}
Then
\begin{align*}
{} [B_p,[A,B_p]]u_i & =4[x_p,\frac{\partial}{\partial x_p}\Delta]u_i\\
{} & = -4 \Big(\Delta + 2\big( \frac{\partial}{\partial x_p} \big)^2
\Big)u_i.
\end{align*}
It follows that
\begin{align}
{} \sum_{p=1}^n \langle [A,B_p]u_i,B_p u_i \rangle
&=\frac{1}{2}\sum_{p=1}^n \langle [B_p,[A,B_p]]u_i,u_i\rangle
\nonumber\\
{} & = 2(n+2) \langle -\Delta u_i,u_i \rangle \nonumber\\
{} & \leq 2(n+2)\Big(\|\Delta u_i\|^2\|u_i\|^2 \Big)^{\frac{1}{2}}\label{d1}\\
{}\label{e1} & = 2(n+2) \lambda_i^{\frac{1}{2}}.
\end{align}
Now
\begin{equation}\label{f1}
\sum_{p=1}^n \langle T_p u_i, T_p u_i \rangle=\langle -\Delta u_i,u_i \rangle \leq \lambda_i^{\frac{1}{2}},
\end{equation}
where we used the Cauchy-Schwarz inequality to derive (\ref{d1}) and (\ref{f1}).\\
Substituting (\ref{c1}), (\ref{e1}) and (\ref{f1}) into (\ref{a}) and taking $f(x)=\Big(g(x)\Big)^2=(\lambda_{k+1}-x)$, we obtain inequality (\ref{4}).
\end{remark}
On the other hand, if we take $f(x)=g(x)=(\lambda_{k+1}-x)^2$, in (\ref{poly}),
 we get the following inequality obtained in \cite{Cheng-Ichik-Mamet} (see inequality (2.27) therein) 
\begin{align}\label{14}
\bigg[\sum_{i=1}^k&(\lambda_{k+1}-\lambda_i)^2\bigg]^2\nonumber\\
\leq \frac{4l(2l+n-2)}{n^2}\bigg(\sum_{i=1}^k&(\lambda_{k+1}-\lambda_i)^2\lambda_i^{\frac{l-1}{l}}\bigg) \bigg(\sum_{i=1}^k(\lambda_{k+1}-\lambda_i)\lambda_i^{\frac{1}{l}}\bigg).
\end{align}
Using the following variant of Chebyshev inequality (see
Lemma 1 in \cite{Cheng-Ichik-Mamet}), one can deduce a generalized Yang inequality
\begin{lemma}\label{lemma3}
 Let $A_i$, $B_i$ and $C_i$, $i=1,\ldots,k$, verify $A_1\ge A_2\ge \ldots \ge A_k \ge 0$, $0\leq B_1\leq B_2\leq \ldots \leq B_k$ and $0\leq C_1\leq C_2\leq \ldots \leq C_k$, respectively. Then, we have
\begin{equation*}
\sum_{i=1}^k A_i^2 B_i \sum_{i=1}^k A_i C_i \leq \sum_{i=1}^k A_i^2
\sum_{i=1}^k A_i B_i C_i.
\end{equation*}
\end{lemma}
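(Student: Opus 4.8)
The plan is to prove the inequality by the standard symmetrization (double-sum) device behind Chebyshev-type sum inequalities. Set
\[
D := \sum_{i=1}^k A_i^2 \sum_{j=1}^k A_j B_j C_j - \sum_{i=1}^k A_i^2 B_i \sum_{j=1}^k A_j C_j = \sum_{i,j=1}^k \Big( A_i^2 A_j B_j C_j - A_i^2 A_j B_i C_j \Big),
\]
so that the claim is equivalent to $D \ge 0$.

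First I would symmetrize in $i$ and $j$: writing $D = \tfrac12 \sum_{i,j=1}^k \big[(A_i^2 A_j B_j C_j - A_i^2 A_j B_i C_j) + (A_j^2 A_i B_i C_i - A_j^2 A_i B_j C_i)\big]$, the bracket equals $A_i^2 A_j C_j(B_j - B_i) - A_j^2 A_i C_i(B_j - B_i) = A_i A_j (B_j - B_i)(A_i C_j - A_j C_i)$, hence
\[
D = \frac12 \sum_{i,j=1}^k A_i A_j (B_j - B_i)(A_i C_j - A_j C_i) = \sum_{1\le i<j\le k} A_i A_j (B_j - B_i)(A_i C_j - A_j C_i),
\]
the last equality because the summand is invariant under the exchange $i \leftrightarrow j$ and vanishes when $i = j$.

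It then remains to check that every term of this last sum is nonnegative. Fix $i < j$. Then $A_i A_j \ge 0$, and $B_j - B_i \ge 0$ since $(B_i)$ is nondecreasing. For the remaining factor, using $A_i \ge A_j \ge 0$ together with $C_j \ge C_i \ge 0$ we get $A_i C_j \ge A_j C_j \ge A_j C_i$, so that $A_i C_j - A_j C_i \ge 0$. Therefore $D \ge 0$, which is exactly the asserted inequality. The only point requiring care is the bookkeeping in the symmetrization step, namely getting the bracket to factor cleanly as $A_iA_j(B_j - B_i)(A_i C_j - A_j C_i)$; once that is done, the sign analysis is immediate, though it is worth noting that the monotonicity of \emph{both} $(A_i)$ and $(C_i)$ (not merely one of them) is what makes the factor $A_iC_j - A_jC_i$ nonnegative.
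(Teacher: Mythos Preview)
Your proof is correct: the symmetrization factors the difference cleanly as $\sum_{i<j} A_iA_j(B_j-B_i)(A_iC_j-A_jC_i)$, and your sign analysis of each factor is valid. Note that the paper does not actually supply a proof of this lemma---it merely quotes the statement from Cheng, Ichikawa and Mametsuka---so there is no in-paper argument to compare against; your double-sum symmetrization is in any case the standard route for such Chebyshev-type inequalities.
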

In fact if we apply this Lemma to the right side of inequality (\ref
{14}), with $A_i=\lambda_{k+1}-\lambda_i$,
$B_i=\lambda_i^{\frac{l-1}{l}}$ and $C_i=\lambda_i^{\frac{1}{l}}$,
we obtain,
\begin{equation}\label{15}
\sum_{i=1}^k(\lambda_{k+1}-\lambda_i)^2\leq \frac{4l(n+2l-2)}{n^2}\sum_{i=1}^k(\lambda_{k+1}-\lambda_i)\lambda_i.
\end{equation}
which, in the case where $A=(-\Delta)^l$, $Q=-\Delta$, $B_p=x_p$,
$p=1,\ldots,n$ and $T_p=\frac{\partial}{\partial x_p}$, gives us
inequality (\ref {8'}) of Cheng, Ichikawa and Mametsuka (see
inequality (1.11) in \cite{Cheng-Ichik-Mamet}).\\
Finally, we note that considering other choices of values for the couple $(f,g)$ lead to many new inequalities.
\section{Applications to the Kohn Laplacian on the Heisenberg group}
In this section, we consider the $2n+1$-dimensional Heisenberg group $\mathbb{H}^n$, which is the space $\R^{2n+1}$ equipped with the non-commutative group law
\begin{equation*}
 (x,y,t)(x',y',t')=\bigg(x+x',y+y',t+t'+\frac{1}{2}\bigg)(\langle x',y \rangle_{\R^n}-\langle x,y' \rangle_{\R^n}),
\end{equation*}
where $x,\, x',\,y,\, y'\,\in \,\R^n$, $t$ and $t' \,\in\,\R$. We
denote by $\mathcal{H}^n$ its Lie algebra, it has a basis formed by
the following vector fields $T=\frac{\partial}{\partial t}$,
$X_p=\frac{\partial}{\partial
x_p}+\frac{y_p}{2}\frac{\partial}{\partial t}$ and
$Y_p=\frac{\partial}{\partial
y_p}-\frac{x_p}{2}\frac{\partial}{\partial t}$. We note that the
only non-trivial commutators are $[Y_p,X_q]=T \delta_{pq}$. Let
$\Delta_{\mathbb{H}^n}$ denote the real Kohn-Laplacian in the
Heisenberg group $\mathbb{H}^n$. It is given by
\begin{align*}
 \Delta_{\mathbb{H}^n} & =\sum_{p=1}^n X_p^2+Y_p^2\\
&= \Delta^{\mathbb{R}^{2n}}_{xy}+\frac{1}{4}(|x|^{2}+|y|^{2})\frac{\partial^{2}}{{\partial t}^{2}}+ \frac{\partial}{\partial t}\sum_{p=1}^{n}\left(y_{p}\frac{\partial}{\partial x_{p}}-x_{p}\frac{\partial}{\partial y_{p}}\right).\end{align*}
We are concerned here with the following eigenvalue problem:
\begin{equation}\label{Kohn Lap}
\begin{cases}
(-\Delta_{\mathbb{H}^n})^l u  = \lambda u \,\,\,\,\text{in}\,\,\Omega,\\
u=\displaystyle{\frac{\partial u}{\partial \nu}=\ldots=\frac{\partial^{l-1} u}{\partial \nu^{l-1}}}=0\,\,\,\,\text{on}\,\,\partial\Omega,
\end{cases}
\end{equation}
where $\Omega$ is a bounded domain in $\mathbb{H}^n$, with smooth boundary $\partial \Omega$, $\nu$ is the unit outward normal to $\partial \Omega$ and $l\ge 1$ is any positive integer. We denote by $L=-\Delta_{\mathbb{H}^n}$ and $\nabla_{\mathbb{H}^n}=(X_1,\ldots,X_n,Y_1,\ldots,Y_n)$.\\
We let
\begin{equation*}
 0<\lambda_1 \leq \lambda_2 \leq \ldots \leq \lambda_k \leq \ldots \rightarrow + \infty
\end{equation*}
denote the eigenvalues of problem (\ref{Kohn Lap}) with
corresponding eigenfunctions $u_1,\, u_2,\ldots, u_k,\ldots$ in
$S_{0}^{l,2}(\Omega)$. Here $S^{l,2}(\Omega)$ is the Hilbert space
of the functions $u$ in $L^2(\Omega)$ such that $X_p u,\, Y_p u,\,
X^2_{p} u, Y^2_p u,\ldots,\,X_p^l(u)$, $Y_p^l(u)\,\in\,L^2(\Omega)$,
and $S_0^{l,2}$ denotes the closure of
$\textit{C}_{0}^{\infty}(\Omega)$ with respect to the Sobolev norm
\begin{equation*}
 \|u\|_{S^{l,2}}^2=\displaystyle \int_{\Omega}\bigg( \sum_{d=1}^l \Big(\sum_{p=1}^n \arrowvert X_p^d u\arrowvert^2+\sum_{p=1}^n\arrowvert Y_p^d u\arrowvert^2\Big) +\arrowvert u \arrowvert^2\bigg) dx dy dt.
\end{equation*}
We orthonormalize the eigenfunctions $u_i$ so that; $\forall \,i,\,j\,\geq 1$,
\begin{equation*}
\langle u_i,u_j\rangle_{L^2}=\int_{\Omega}u_i u_j dx dy dt=\delta_{ij}.
\end{equation*}

In all this paragraph, our results can be stated in a general form using functions $f$ and $g \in \Im_{\lambda_{k+1}}$ as in the first part of this paper, but we limit ourselves to the case $f(x)=g(x)=(\lambda_{k+1}-x)^2$. This gives us new bounds of the Yang type for eigenvalues of problem (\ref{Kohn Lap}) which improve earlier ones obtained by Niu and Zhang \cite{Niu-Zhang}.\\
We also note that we
must treat the three following cases independently: the case when
$l=1$, the case when $l=2$ and the case when $l\ge 3$. This is
essentially due to the difference of the calculations in these three cases.
\subsection{The case when $\textit{\textbf{l}}$ = 1}
In this subsection, we are concerned with the case where $l=1$. The
result we obtain is a result proved earlier by the first author, El
Soufi and Harrell in \cite{Soufi.Harl.Ilias} and for which we give
here a different proof, more easily adapted to the other cases $l=2$
and $l\ge3$.
\begin{theorem} For any $k\ge 1$
\begin{equation}\label{m2}
\sum_{i=1}^k(\lambda_{k+1}-\lambda_i)^2\leq \frac{2}{n}\sum_{i=1}^k(\lambda_{k+1}-\lambda_i)\lambda_i.
\end{equation}
\end{theorem}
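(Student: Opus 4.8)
The plan is to apply Theorem~\ref{first theorem} (equivalently, its specialization in inequality~\eqref{poly}) to the operator $A = L = -\Delta_{\mathbb{H}^n}$ on the domain $\Omega \subset \mathbb{H}^n$, with the choice $f(x) = g(x) = (\lambda_{k+1}-x)^2$, which belongs to $\Im_{\lambda_{k+1}}$ (it is of the form $(\lambda-x)^\delta$ with $\delta = 2$). The abstract machinery requires a family of skew-symmetric operators $T_p$ and symmetric operators $B_p$ with the right commutation relations, so the first step is to identify them. Since $l=1$ here, the natural choice mimics the Euclidean case: take $B_p = x_p$ and $B_{n+p} = y_p$ for $p=1,\dots,n$ (multiplication operators, hence symmetric and leaving $C_0^\infty(\Omega)$ invariant), and take $T_p = X_p$, $T_{n+p} = Y_p$ for $p=1,\dots,n$, the horizontal vector fields, which are skew-symmetric on $S_0^{1,2}(\Omega)$ after integration by parts. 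Thus the index $p$ in Theorem~\ref{first theorem} runs over $2n$ values, not $n$; this is the source of the constant $2n$ (rather than $n$) appearing where the Euclidean bound would have $n$.

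Next I would compute the three ingredients feeding into~\eqref{a}: the commutator term $\langle [T_p,B_p]u_i,u_i\rangle$, the ``source'' term $\sum_p \langle [A,B_p]u_i, B_p u_i\rangle$, and the gradient term $\sum_p \|T_p u_i\|^2$. For the commutators: $[X_p, x_p] = 1$ and $[Y_p, y_p] = 1$ (the extra $\frac{y_p}{2}\partial_t$ and $-\frac{x_p}{2}\partial_t$ pieces kill nothing relevant since they commute with $x_p$, resp. $y_p$), so $\sum_{p=1}^{2n}\langle [T_p,B_p]u_i,u_i\rangle = 2n$. For the gradient term, $\sum_{p=1}^{n}(\|X_p u_i\|^2 + \|Y_p u_i\|^2) = \langle -\Delta_{\mathbb{H}^n} u_i, u_i\rangle = \langle L u_i, u_i\rangle = \lambda_i$, exactly (no Cauchy--Schwarz loss, since $l=1$). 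For the source term, I use $\langle [A,B_p]u_i,B_pu_i\rangle = \tfrac12\langle [B_p,[A,B_p]]u_i,u_i\rangle$ and compute the double commutators: $[x_p,[L,x_p]] = -[x_p,[\Delta_{\mathbb{H}^n},x_p]]$, and since $[\Delta_{\mathbb{H}^n}, x_p] = 2X_p$ (only $X_p$ among the generators fails to commute with $x_p$), one gets $[x_p,[L,x_p]] = -2[x_p, X_p]\cdot(\text{appropriate sign})$, yielding a scalar; summing the analogous $y_p$ contributions gives $\sum_{p=1}^{2n}\langle [A,B_p]u_i, B_p u_i\rangle = 2n \cdot \frac{1}{2}\cdot 2 \cdot \langle u_i,u_i\rangle$ --- more carefully, each $p$ contributes $1$, so the sum is $2n$. (This is the $l=1$ analogue of~\eqref{5}, where $l(2l+n-2)\lambda_i^{(l-1)/l}$ with $l=1$ formally gives $n$, but here the Heisenberg dimension count replaces $n$ by $2n$ and the power of $\lambda_i$ is $0$.)

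Substituting these three evaluations into inequality~\eqref{a} with $f(x)=g(x)=(\lambda_{k+1}-x)^2$ gives
\begin{equation*}
\Bigl(\sum_{i=1}^k (\lambda_{k+1}-\lambda_i)^2 \cdot 2n\Bigr)^2 \le 4\Bigl(\sum_{i=1}^k (\lambda_{k+1}-\lambda_i)^2\cdot 2n\Bigr)\Bigl(\sum_{i=1}^k \frac{(\lambda_{k+1}-\lambda_i)^4}{(\lambda_{k+1}-\lambda_i)^2(\lambda_{k+1}-\lambda_i)}\,\lambda_i\Bigr),
\end{equation*}
i.e.\ $(2n)^2 S^2 \le 4 (2n) S \sum_{i=1}^k (\lambda_{k+1}-\lambda_i)\lambda_i$ where $S = \sum_i (\lambda_{k+1}-\lambda_i)^2$. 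Dividing by $4nS$ (which is positive, using $\lambda_{k+1}>\lambda_k$ so $S>0$) yields exactly $\sum_{i=1}^k(\lambda_{k+1}-\lambda_i)^2 \le \frac{2}{n}\sum_{i=1}^k(\lambda_{k+1}-\lambda_i)\lambda_i$, which is~\eqref{m2}. The main obstacle, such as it is, lies in verifying carefully that the horizontal vector fields $X_p, Y_p$ are genuinely skew-symmetric as unbounded operators on $S_0^{1,2}(\Omega)$ with the stated domain (the boundary terms in the integration by parts vanish because $u \in S_0^{l,2}$), that they leave a suitable dense invariant domain $\mathcal{D}$ stable, and that the double-commutator computation with the multiplication operators $x_p,y_p$ is done correctly --- in particular tracking that the ``drift'' terms $\frac{y_p}{2}\partial_t$ and $-\frac{x_p}{2}\partial_t$ in $X_p, Y_p$ do not spoil $[X_p,x_p]=1$ nor contribute to $[x_p,[L,x_p]]$. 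Everything else is the bookkeeping already carried out in Section~3 for the polyharmonic case, specialized to $l=1$ with the generator count $n \rightsquigarrow 2n$.
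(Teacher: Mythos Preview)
Your proposal is correct and follows essentially the same route as the paper: apply the abstract inequality~\eqref{a} with $A=L$, the $2n$ multiplication operators $x_p,y_p$ as $B_p$, the horizontal fields $X_p,Y_p$ as $T_p$, and $f=g=(\lambda_{k+1}-x)^2$, then evaluate the three ingredients to $2n$, $2n$, and $\lambda_i$ respectively. The only cosmetic difference is that the paper computes $\langle[L,x_p]u_i,x_pu_i\rangle=1$ by an explicit integration by parts (showing $[L,x_p]=-2X_p$ and then expanding), whereas you reach the same value via the double-commutator identity $\langle[A,B_p]u_i,B_pu_i\rangle=\tfrac12\langle[B_p,[A,B_p]]u_i,u_i\rangle$; the substance is identical.
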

\begin{proof}
We will prove this theorem by applying inequality (\ref{a}) with
$A=L=-\Delta_{\mathbb{H}^n}$,
$B_1=x_1,\ldots,B_n=x_n,B_{n+1}=y_1,\ldots,B_{2n}=y_n$,
$T_1=X_1,\ldots,T_n=X_n,T_{n+1}=Y_1,\ldots,T_{2n}=Y_n$ and $f(x)=g(x)=(\lambda_{k+1}-x)^2$, namely,
\begin{align}\label{n2}
{}\bigg[\sum_{p=1}^n\sum_{i=1}^k(\lambda_{k+1}-\lambda_i)^2
\Big(&\langle[X_p,x_p]u_i, u_i\rangle_{L^2}+\langle [Y_p,y_p]u_i,u_i\rangle_{L^2}\Big)\bigg]^2\nonumber\\
\leq4\bigg[\sum_{p=1}^n\sum_{i=1}^k(\lambda_{k+1}-\lambda_i)^2\Big(&\langle
[L,x_p]u_i,x_p u_i\rangle_{L^2}+\langle [L,y_p]u_i,y_p
u_i\rangle_{L^2}\Big)\bigg]\times\nonumber\\
\bigg[\sum_{p=1}^n\sum_{i=1}^k&(\lambda_{k+1}-\lambda_i)\Big(\|X_pu_i\|_{L^2}^2+\|Y_pu_i\|_{L^2}^2\Big)\bigg].
\end{align}
By a straightforward calculation, we obtain $[L,x_p]u_i=-2X_pu_i$
and $[L,y_p]u_i=-2Y_pu_i$.\\
 Hence
\begin{align*}
 \langle[L,x_p]u_i,x_p u_i\rangle_{L^2}&=-2\int_{\Omega}X_pu_i.x_pu_i=2\int_{\Omega}u_i.X_p(x_pu_i) \nonumber\\
 &=2\int_{\Omega}u_i^2+2\int_{\Omega}x_pu_i.X_pu_i
\end{align*}
and
\begin{align*}
 \langle[L,y_p]u_i,y_p u_i\rangle_{L^2}&=-2\int_{\Omega}Y_pu_i.y_pu_i=2\int_{\Omega}u_i.Y_p(y_pu_i) \nonumber\\
 &=2\int_{\Omega}u_i^2+2\int_{\Omega}y_pu_i.Y_pu_i,
\end{align*} then
\begin{equation}\label{p2}
\langle[L,x_p]u_i,x_p u_i\rangle_{L^2}=\langle[L,y_p]u_i,y_p u_i\rangle_{L^2}=1.
\end{equation}
On the other hand, we have
\begin{equation}\label{r2}
[X_p,x_p]u_i=[Y_p,y_p]u_i=u_i
\end{equation}
and \begin{equation}\label{p1}\sum_{p=1}^n\|X_pu_i\|^2_{L^2}+
\sum_{p=1}^n\|Y_pu_i\|^2_{L^2}=\int_{\Omega}\arrowvert\nabla_{\mathbb{H}^n}u_i\arrowvert^2=\lambda_i.\end{equation}
Thus incorporating (\ref{p2}), (\ref{r2}) and (\ref{p1}) in
(\ref{n2}), we obtain (\ref{m2}).
\end{proof}
\begin{remark}
Inequality (\ref{m2}) improves the following inequality proved by
Niu and Zhang in \cite{Niu-Zhang} (see Remark 5.1 in
\cite{Soufi.Harl.Ilias})
\begin{equation*}
 \lambda_{k+1}-\lambda_k\leq \frac{2}{nk}\sum_{i=1}^k\lambda_i.
\end{equation*}
\end{remark}

\subsection{The case when $\textit{\textbf{l}}$ = 2}
In this subsection, we will derive the following
\begin{theorem}\label{5th theorem}
We have, for each $k=1,2,\ldots$,
\begin{align}\label{c3}
\sum_{i=1}^k(\lambda_{k+1}-\lambda_i)^2 \leq
\frac{2\sqrt{n+1}}{n}\bigg[\sum_{i=1}^k(\lambda_{k+1}-\lambda_i)\lambda_i^{\frac{1}{2}}\bigg]^{\frac{1}{2}}\bigg[\sum_{i=1}^k(\lambda_{k+1}-\lambda_i)^2\lambda_i^{\frac{1}{2}}\bigg]^{\frac{1}{2}}.
\end{align}
\end{theorem}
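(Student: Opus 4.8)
\emph{Proof proposal.} The plan is to apply the abstract inequality (\ref{a}) of Theorem \ref{first theorem} exactly as in the case $l=1$, now with $A=L^{2}=(-\Delta_{\mathbb{H}^{n}})^{2}$, with the $2n$ symmetric operators $B_{1}=x_{1},\ldots,B_{n}=x_{n},B_{n+1}=y_{1},\ldots,B_{2n}=y_{n}$, the $2n$ skew-symmetric operators $T_{1}=X_{1},\ldots,T_{n}=X_{n},T_{n+1}=Y_{1},\ldots,T_{2n}=Y_{n}$, and $f(x)=g(x)=(\lambda_{k+1}-x)^{2}$. It is worth observing at the outset that, in contrast with the Euclidean situation of Section 3, here $[L,X_{p}]=-2Y_{p}T\neq0$; hence inequality (\ref{poly}) cannot be invoked directly (its derivation used Lemma \ref{Hook1} and Theorem \ref{Hook2}, both of which require $[Q,T_{p}]=0$), and the three quantities entering (\ref{a}) must be computed by hand.

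Two of these are immediate. Since $[X_{p},x_{p}]=[Y_{p},y_{p}]=1$, the factor $\sum_{p=1}^{n}\big(\langle[X_{p},x_{p}]u_{i},u_{i}\rangle+\langle[Y_{p},y_{p}]u_{i},u_{i}\rangle\big)$ equals $2n$, so the left-hand side of (\ref{a}) is $4n^{2}\big(\sum_{i=1}^{k}(\lambda_{k+1}-\lambda_{i})^{2}\big)^{2}$. Moreover, since $\sum_{p=1}^{n}(X_{p}^{2}+Y_{p}^{2})=\Delta_{\mathbb{H}^{n}}=-L$ while $L^{2}u_{i}=\lambda_{i}u_{i}$, the Cauchy--Schwarz inequality gives
\begin{equation*}
\sum_{p=1}^{n}\big(\|X_{p}u_{i}\|^{2}+\|Y_{p}u_{i}\|^{2}\big)=\langle Lu_{i},u_{i}\rangle\le\|Lu_{i}\|\,\|u_{i}\|=\langle L^{2}u_{i},u_{i}\rangle^{\frac{1}{2}}=\lambda_{i}^{\frac{1}{2}}.
\end{equation*}

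The main work is the middle factor $\sum_{p=1}^{n}\big(\langle[L^{2},x_{p}]u_{i},x_{p}u_{i}\rangle+\langle[L^{2},y_{p}]u_{i},y_{p}u_{i}\rangle\big)$. Using the self-adjointness identity $\langle[A,B_{p}]u_{i},B_{p}u_{i}\rangle=\frac{1}{2}\langle[B_{p},[A,B_{p}]]u_{i},u_{i}\rangle$ already employed in Section 3, it suffices to compute $\sum_{p=1}^{n}\big([x_{p},[L^{2},x_{p}]]+[y_{p},[L^{2},y_{p}]]\big)$. From $[L,x_{p}]=-2X_{p}$ one gets $[L^{2},x_{p}]=-2(LX_{p}+X_{p}L)$, and expanding the outer commutator by the Leibniz rule, using $[x_{p},X_{p}]=-1$ and $[x_{p},L]=2X_{p}$ (and the analogous identities for $y_{p}$), yields $[x_{p},[L^{2},x_{p}]]=-8X_{p}^{2}+4L$ and $[y_{p},[L^{2},y_{p}]]=-8Y_{p}^{2}+4L$. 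Summing over $p=1,\ldots,n$ and using $\sum_{p=1}^{n}(X_{p}^{2}+Y_{p}^{2})=-L$ once more gives
\begin{equation*}
\sum_{p=1}^{n}\big([x_{p},[L^{2},x_{p}]]+[y_{p},[L^{2},y_{p}]]\big)=8(n+1)L,
\end{equation*}
whence $\sum_{p=1}^{n}\big(\langle[L^{2},x_{p}]u_{i},x_{p}u_{i}\rangle+\langle[L^{2},y_{p}]u_{i},y_{p}u_{i}\rangle\big)=4(n+1)\langle Lu_{i},u_{i}\rangle\le4(n+1)\lambda_{i}^{\frac{1}{2}}$ by the same Cauchy--Schwarz estimate. (Alternatively, one may evaluate $[L^{2},x_{p}]u_{i}$ explicitly and integrate by parts, in the spirit of the Euclidean $l=2$ remark of Section 3.)

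Substituting these three evaluations into (\ref{a}) — replacing the middle and the last factors of the right-hand side by their upper bounds is legitimate because the last factor is a sum of nonnegative terms and, by inequality (\ref{positive}) in the proof of Theorem \ref{first theorem}, the middle factor is also nonnegative — and noting that $f(\lambda_{i})^{2}/\big(g(\lambda_{i})(\lambda_{k+1}-\lambda_{i})\big)=\lambda_{k+1}-\lambda_{i}$ in the last sum, one arrives at
\begin{equation*}
4n^{2}\bigg(\sum_{i=1}^{k}(\lambda_{k+1}-\lambda_{i})^{2}\bigg)^{2}\le16(n+1)\bigg(\sum_{i=1}^{k}(\lambda_{k+1}-\lambda_{i})^{2}\lambda_{i}^{\frac{1}{2}}\bigg)\bigg(\sum_{i=1}^{k}(\lambda_{k+1}-\lambda_{i})\lambda_{i}^{\frac{1}{2}}\bigg).
\end{equation*}
Dividing by $4n^{2}$ and taking square roots yields exactly (\ref{c3}). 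I expect the middle-factor commutator computation to be the only genuinely delicate step; everything else is bookkeeping, and the non-triviality of $[L,X_{p}]$ causes no trouble, since only commutators of the $B_{p}$ with $L^{2}$ and with the $T_{p}$ of the same index appear in the expansion.
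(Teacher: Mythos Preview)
Your proposal is correct and follows essentially the same route as the paper: the same choice of $A=L^{2}$, of $B_{p}$, $T_{p}$ and of $f=g=(\lambda_{k+1}-x)^{2}$ in Theorem~\ref{first theorem}, the same Cauchy--Schwarz estimate $\langle Lu_{i},u_{i}\rangle\le\lambda_{i}^{1/2}$ for the first and third factors, and the same outcome $4(n+1)\langle Lu_{i},u_{i}\rangle$ for the middle factor. The only cosmetic difference is that you compute the middle factor via the double-commutator identity $\langle[A,B_{p}]u_{i},B_{p}u_{i}\rangle=\tfrac{1}{2}\langle[B_{p},[A,B_{p}]]u_{i},u_{i}\rangle$ (as in Section~3), whereas the paper expands $[L^{2},x_{p}]u_{i}=-2X_{p}Lu_{i}-2L(X_{p}u_{i})$ and integrates by parts directly; you yourself note this alternative, and both computations land on the same expression.
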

\begin{proof}
The key observation here is to apply Theorem \ref{first theorem}
with $A=L^2=(-\Delta_{\mathbb{H}^n})^2$, and as before
$B_1=x_1,B_2=x_2,\cdots,B_n=x_n,B_{n+1}=y_1,\cdots,B_{2n}=y_n$,
$T_1=X_1,\cdots,T_n=X_n,T_{n+1}=Y_{1},\cdots,T_{2n}=Y_{n}$ and $f(x)=g(x)=(\lambda_{k+1}-x)^2$. Thus we
have
\begin{align}\label{s2}
\bigg[\sum_{p=1}^n \sum_{i=1}^k
(\lambda_{k+1}-\lambda_i)^2&\Big(\langle[X_p,x_p]u_i,u_i\rangle_{L^2}+\langle
[Y_{p},y_p]u_i,u_i\rangle_{L^2}\Big)\bigg]^{2}\nonumber\\
{} \le 4 \bigg[ \sum_{p=1}^n \sum_{i=1}^k (\lambda_{k+1}-\lambda_i)^2 &\Big( \langle [L^2,x_p]u_i,x_pu_i\rangle_{L^2}+ \langle [L^2,y_p]u_i,y_pu_i\rangle_{L^2}\Big)\bigg]\nonumber\\
{} \times \bigg[\sum_{p=1}^n& \sum_{i=1}^k
(\lambda_{k+1}-\lambda_i)\Big( \|X_pu_i\|_{L^2}^2+\|Y_pu_i\|_{L^2}^2
\Big) \bigg]
\end{align}
but
\begin{align}\label{s'2}
\sum_{p=1}^n\|X_pu_i\|^2_{L^2}+\sum_{p=1}^n\|Y_pu_i\|^2_{L^2}& =\int_{\Omega}\arrowvert\nabla_{\mathbb{H}^n}u_i\arrowvert^2=\int_{\Omega}Lu_i.u_i \nonumber\\
{}&\le\bigg(\int_{\Omega}u_i^2\bigg)^{\frac{1}{2}}\bigg(\int_{\Omega}\Big(Lu_i\Big)^{2}\bigg)^{\frac{1}{2}}=\lambda_i^{\frac{1}{2}},\end{align}
thus
\begin{equation} \label{t2}
\sum_{p=1}^n \sum_{i=1}^k (\lambda_{k+1}-\lambda_i)\big(
\|X_pu_i\|_{L^2}^2+\|Y_pu_i\|_{L^2}^2 \big)=\sum_{i=1}^k
(\lambda_{k+1}-\lambda_i)\lambda_{i}^{\frac{1}{2}}.
\end{equation}
Using (\ref{r2}), we get
\begin{equation}\label{a3}
\langle[X_p,x_p]u_i,u_i\rangle_{L^2}=\langle[Y_p,y_p]u_i,u_i\rangle_{L^2}=1.
\end{equation}
Thus,
\begin{align}\label{u2}
\bigg[\sum_{p=1}^n \sum_{i=1}^k (\lambda_{k+1}&-\lambda_i)^2\left(\langle[X_p,x_p]u_i,u_i\rangle_{L^2}+\langle [Y_{p},y_p]u_i,u_i\rangle_{L^2}\right)\bigg]^{2}\\
&=4n^{2}\bigg[\sum_{i=1}^k
(\lambda_{k+1}-\lambda_i)^2\bigg]^{2}.\nonumber
\end{align}
On the other hand
\begin{align}\label{v2}
[L^2,x_p]u_i=L^2(x_p u_i)
-x_pL^2u_i&=L(x_{p}Lu_{i}-2X_{p}u_{i})-x_{p}L^{2}u_{i}\nonumber\\
&=-2X_pLu_i-2L(X_pu_i)
\end{align}
and the same identity holds with $y_p$ and $Y_p$.\\
We infer, using identities (\ref{r2}) and (\ref{v2})
\begin{align}\label{w2}
\langle[L^2,x_p]u_i,x_pu_i\rangle_{L^2}& =-2\int_{\Omega}X_pLu_i.x_pu_i-2\int_{\Omega}L(X_pu_i).x_pu_i\nonumber\\
= -2&\int_{\Omega}X_pLu_i.x_pu_i-2\int_{\Omega}X_pu_i.x_pLu_i+4\int_{\Omega}X_pu_i.X_pu_i\nonumber\\
=2&\int_{\Omega}Lu_i.X_p(x_pu_i)-2\int_{\Omega}x_pX_pu_i.Lu_i-4\int_{\Omega}X_p^2u_i.u_i\nonumber\\
=&2\int_{\Omega}Lu_i.u_i-4\int_{\Omega}X_p^2u_i.u_i.
\end{align}
Similarly, we have
\begin{align}\label{x2}
\langle[L^2,y_p]u_i,y_pu_i\rangle_{L^2}=
2\int_{\Omega}Lu_i.u_i-4\int_{\Omega}Y_p^2u_i.u_i.
\end{align}
Since
\begin{align*}
-\sum_{p=1}^n\int_{\Omega}X_p^2u_i.u_i-\sum_{p=1}^n\int_{\Omega}Y_p^2u_i.u_i &= \sum_{p=1}^n\|X_pu_i\|_{L^2}^2+\sum_{p=1}^n\|Y_pu_i\|_{L^2}^2\\
&=\int_{\Omega}Lu_i.u_i,\nonumber
\end{align*}
we have
\begin{align}\label{y2}
\bigg[ \sum_{p=1}^n \sum_{i=1}^k (\lambda_{k+1}&-\lambda_i)^2 \big( \langle [L^2,x_p]u_i,x_pu_i\rangle_{L^2}+ \langle [L^2,y_p]u_i,y_pu_i\rangle_{L^2}\big)\bigg] \nonumber\\
{}& =4(n+1)\sum_{i=1}^k (\lambda_{k+1}-\lambda_i)^2\int_{\Omega}Lu_i.u_i\nonumber\\
{}&\leq 4(n+1)\sum_{i=1}^k (\lambda_{k+1}-\lambda_i)^2\bigg(\int_{\Omega}u_i^2\bigg)^{\frac{1}{2}}\bigg(\int_{\Omega}\Big(Lu_i\Big)^{2}\bigg)^{\frac{1}{2}}\nonumber \\
{}& =4(n+1)\sum_{i=1}^k
(\lambda_{k+1}-\lambda_i)^2\lambda_i^{\frac{1}{2}}.
\end{align}
Incorporating (\ref{t2}), (\ref{u2}) and (\ref{y2}) in (\ref{s2}),
we get the result.
\end{proof}
We can easily obtain from inequality  (\ref{c3}) of Theorem \ref{5th theorem} an inequality of Yang-type.
\begin{corollary}
We have, for each $k\ge1$,
\begin{equation}\label{cor1}
\sum_{i=1}^k(\lambda_{k+1}-\lambda_i)^2\leq \frac{4(n+1)}{n^2}\sum_{i=1}^k(\lambda_{k+1}-\lambda_i)\lambda_i.
\end{equation}
\end{corollary}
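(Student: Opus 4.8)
The plan is to deduce the Yang-type inequality (\ref{cor1}) directly from inequality (\ref{c3}) of Theorem \ref{5th theorem} by invoking the Chebyshev-type estimate of Lemma \ref{lemma3}. Writing $A_i=\lambda_{k+1}-\lambda_i$ (a nonincreasing, nonnegative sequence since the $\lambda_i$ are nondecreasing and $i\le k$), $B_i=C_i=\lambda_i^{1/2}$ (nonnegative and nondecreasing), Lemma \ref{lemma3} gives
\begin{equation*}
\sum_{i=1}^k A_i^2 B_i \sum_{i=1}^k A_i C_i \le \sum_{i=1}^k A_i^2 \sum_{i=1}^k A_i B_i C_i,
\end{equation*}
that is,
\begin{equation*}
\bigg[\sum_{i=1}^k(\lambda_{k+1}-\lambda_i)^2\lambda_i^{\frac12}\bigg]\bigg[\sum_{i=1}^k(\lambda_{k+1}-\lambda_i)\lambda_i^{\frac12}\bigg]\le\bigg[\sum_{i=1}^k(\lambda_{k+1}-\lambda_i)^2\bigg]\bigg[\sum_{i=1}^k(\lambda_{k+1}-\lambda_i)\lambda_i\bigg].
\end{equation*}

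Next I would substitute this bound into the product of the two bracketed factors on the right-hand side of (\ref{c3}). Indeed, (\ref{c3}) reads
\begin{equation*}
\sum_{i=1}^k(\lambda_{k+1}-\lambda_i)^2\le\frac{2\sqrt{n+1}}{n}\bigg[\sum_{i=1}^k(\lambda_{k+1}-\lambda_i)\lambda_i^{\frac12}\bigg]^{\frac12}\bigg[\sum_{i=1}^k(\lambda_{k+1}-\lambda_i)^2\lambda_i^{\frac12}\bigg]^{\frac12},
\end{equation*}
so squaring both sides and then applying the Chebyshev estimate to the product inside yields
\begin{equation*}
\bigg[\sum_{i=1}^k(\lambda_{k+1}-\lambda_i)^2\bigg]^2\le\frac{4(n+1)}{n^2}\bigg[\sum_{i=1}^k(\lambda_{k+1}-\lambda_i)^2\bigg]\bigg[\sum_{i=1}^k(\lambda_{k+1}-\lambda_i)\lambda_i\bigg].
\end{equation*}

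Finally, if $\sum_{i=1}^k(\lambda_{k+1}-\lambda_i)^2>0$ (which holds since $\lambda_{k+1}>\lambda_k\ge\lambda_i$ forces at least one strictly positive term, in particular the $i=k$ term), we may divide both sides by this quantity to obtain exactly (\ref{cor1}); in the degenerate case where the sum vanishes the inequality is trivial. There is no real obstacle here: the only point requiring a moment's care is checking that the monotonicity hypotheses of Lemma \ref{lemma3} are met, namely that $i\mapsto\lambda_{k+1}-\lambda_i$ is nonincreasing and nonnegative on $\{1,\dots,k\}$ and that $i\mapsto\lambda_i^{1/2}$ is nondecreasing and nonnegative, both of which are immediate from $0<\lambda_1\le\lambda_2\le\cdots\le\lambda_k<\lambda_{k+1}$. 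This mirrors exactly the passage from (\ref{14}) to (\ref{15}) in the Euclidean polyharmonic case treated earlier in the paper.
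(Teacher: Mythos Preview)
Your proposal is correct and follows essentially the same route as the paper: square inequality (\ref{c3}), apply Lemma \ref{lemma3} with $A_i=\lambda_{k+1}-\lambda_i$ and $B_i=C_i=\lambda_i^{1/2}$, then cancel the common factor $\sum_{i=1}^k(\lambda_{k+1}-\lambda_i)^2$. Your version is slightly more explicit in checking the monotonicity hypotheses of Lemma \ref{lemma3} and in handling the degenerate case, but the argument is the same.
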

\begin{proof}
 Inequality (\ref{c3}) is equivalent to
\begin{equation*}
\bigg[\sum_{i=1}^k(\lambda_{k+1}-\lambda_i)^2\bigg]^2\leq\frac{4(n+1)}{n^2}\bigg[\sum_{i=1}^k(\lambda_{k+1}-\lambda_i)\lambda_i^{\frac{1}{2}}\bigg]\bigg[\sum_{i=1}^k(\lambda_{k+1}-\lambda_i)^2\lambda_i^{\frac{1}{2}}\bigg].
\end{equation*}
Now applying Lemma \ref{lemma3} with $A_i=\lambda_{k+1}-\lambda_i$
and $B_i=C_i=\lambda_i^{\frac{1}{2}}$, we obtain inequality
(\ref{cor1}).
\end{proof}
\begin{remark}
 Inequality (\ref{c3}) is sharper than the following one found by Niu and Zhang \cite{Niu-Zhang}
\begin{equation*}
 \lambda_{k+1}-\lambda_k\leq \frac{4(n+1)}{n^2k^2}\bigg(\sum_{i=1}^k \lambda_{i}^{\frac{1}{2}}\bigg)^2.
\end{equation*}
\begin{proof}
We infer from inequality (\ref{c3}) and the Chebyshev inequality
\begin{align*}
\bigg[\sum_{i=1}^k(\lambda_{k+1}&-\lambda_i)^2\bigg]^2\\
\leq\frac{4(n+1)}{n^2k^2}\bigg[\sum_{i=1}^k(\lambda_{k+1}-\lambda_i)\bigg]&\bigg[\sum_{i=1}^k(\lambda_{k+1}-\lambda_i)^2\bigg]\bigg[\sum_{i=1}^k\lambda^{\frac{1}{2}}\bigg]^2,
\end{align*}
or equivalently
\begin{align*}
\sum_{i=1}^k(\lambda_{k+1}-\lambda_i)^2\leq\frac{4(n+1)}{n^2k^2}\Big[\sum_{i=1}^k(\lambda_{k+1}-\lambda_i)\Big]\Big[\sum_{i=1}^k\lambda^{\frac{1}{2}}\Big]^2.
\end{align*}
Thus
\begin{align}\label{d3}
\sum_{i=1}^k(\lambda_{k+1}-\lambda_i)\bigg[(\lambda_{k+1}-\lambda_i)-\frac{4(n+1)}{n^2k^2}\Big[\sum_{i=1}^k\lambda_i^{\frac{1}{2}}\Big]^2\bigg]\leq
0.
\end{align}
Hence, since $\lambda_i\leq \lambda_k$, for all $i\leq k$, we can
easily deduce the inequality of Niu and Zhang from (\ref{d3}).
\end{proof}
\end{remark}
\subsection {The case when $\textit{\textbf{l}}\ge 3$}
We are now concerned with the problem (\ref{Kohn Lap}) for any $l \ge 3$. The result depends on the parity of $l$. In fact, we prove the following
\begin{theorem}\label{6th theorem}
 For any odd $l\geq3$, we have
\begin{align}
\sum_{i=1}^k(\lambda_{k+1}-\lambda_i)^2 & \leq \frac{1}{n}\bigg[\sum_{i=1}^k (\lambda_{k+1}-\lambda_i)\lambda_i^{\frac{1}{l}}\bigg]^{\frac{1}{2}} \times \nonumber\\
\label{stat a}\bigg\{\sum_{i=1}^k(\lambda_{k+1}&-\lambda_i)^{2}\bigg[\big(2l(n+l-1)\big)\lambda_i^{\frac{l-1}{l}}+c_1(n,l)\Big(\lambda_i+\lambda_i^{\frac{l-2}{l}}\Big) \bigg]\bigg\}^{\frac{1}{2}}
\end{align}
and for any even $l \ge 4$, we have
\begin{align}
\sum_{i=1}^k(\lambda_{k+1}-\lambda_i)^2 & \leq \frac{1}{n}\bigg[\sum_{i=1}^k (\lambda_{k+1}-\lambda_i) \lambda_i^{\frac{1}{l}}\bigg]^{\frac{1}{2}} \times \nonumber\\
\label{stat b}\bigg\{\sum_{i=1}^k(&\lambda_{k+1}-\lambda_i)^{2}\bigg[\big(2ln+4(l-1)\big)\lambda_i^{\frac{l-1}{l}}+c_2(n,l)\lambda_i^{\frac{l-1}{l}}\Big) \bigg]\bigg\}^{\frac{1}{2}},
\end{align}
where $c_1(n,l)$ and $c_2(n,l)$ are two constants depending on $n$ and $l$.
\end{theorem}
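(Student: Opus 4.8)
The plan is to apply Theorem \ref{first theorem} with $A=L^{l}=(-\Delta_{\mathbb{H}^n})^{l}$, $Q=L=-\Delta_{\mathbb{H}^n}$, the $2n$ symmetric operators $B_{1}=x_{1},\dots,B_{n}=x_{n},B_{n+1}=y_{1},\dots,B_{2n}=y_{n}$, the $2n$ skew-symmetric operators $T_{1}=X_{1},\dots,T_{n}=X_{n},T_{n+1}=Y_{1},\dots,T_{2n}=Y_{n}$, and the couple $f(x)=g(x)=(\lambda_{k+1}-x)^{2}$, which lies in $\Im_{\lambda_{k+1}}$ since $\big((\lambda-x)^{\delta},(\lambda-x)^{\delta}\big)$ does for every $0<\delta\le 2$. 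With these choices $[T_{p},B_{p}]=\mathrm{Id}$ for each $p$, so the left-hand side of (\ref{a}) equals $\big(2n\sum_{i=1}^{k}(\lambda_{k+1}-\lambda_{i})^{2}\big)^{2}$, while the last factor on its right-hand side equals $\sum_{i=1}^{k}(\lambda_{k+1}-\lambda_{i})\langle Lu_{i},u_{i}\rangle$, which by the interpolation inequality (\ref{hok}) of Theorem \ref{Hook2} (applied to $Q=L$ with $r=1$, $q=l$, its hypotheses being verified by the family $\{X_{p},Y_{p}\}$) is at most $\sum_{i=1}^{k}(\lambda_{k+1}-\lambda_{i})\lambda_{i}^{1/l}$, exactly as in the cases $l=1,2$. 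Thus the theorem reduces to estimating from above, for each $i$, the quantity $M_{i}:=\sum_{p=1}^{n}\big(\langle[L^{l},x_{p}]u_{i},x_{p}u_{i}\rangle+\langle[L^{l},y_{p}]u_{i},y_{p}u_{i}\rangle\big)$.

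For this I would use $\langle[A,B_{p}]u_{i},B_{p}u_{i}\rangle=\tfrac12\langle[B_{p},[A,B_{p}]]u_{i},u_{i}\rangle$ and compute the iterated commutators from the basic Heisenberg relations $[L,x_{p}]=-2X_{p}$, $[L,y_{p}]=-2Y_{p}$, $[L,X_{p}]=-2TY_{p}$, $[L,Y_{p}]=2TX_{p}$, $[X_{p},x_{p}]=[Y_{p},y_{p}]=\mathrm{Id}$, together with the fact that $T=\partial_{t}$ commutes with each of $x_{p},y_{p},X_{p},Y_{p},L$, starting from $[L^{l},x_{p}]=\sum_{j=0}^{l-1}L^{l-1-j}[L,x_{p}]L^{j}=-2\sum_{j=0}^{l-1}L^{l-1-j}X_{p}L^{j}$. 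Expanding $[x_{p},[L^{l},x_{p}]]$ and its $y$-analogue and summing over $p$, the part that never invokes $[L,X_{p}]$ or $[L,Y_{p}]$ is precisely the computation of Lemma \ref{Hook1} performed in effective dimension $2n$ (using $\sum_{p}(X_{p}^{2}+Y_{p}^{2})=-L$), and it contributes $2l(n+l-1)\langle L^{l-1}u_{i},u_{i}\rangle$ to $M_{i}$; by (\ref{hok}) this is at most $2l(n+l-1)\lambda_{i}^{(l-1)/l}$.

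Every remaining term is produced by at least one use of $[L,X_{p}]=-2TY_{p}$ or $[L,Y_{p}]=2TX_{p}$, each of which trades a power of $L$ for a factor $T$; after dropping the purely imaginary pieces (which disappear on taking real parts), these terms are finite linear combinations, with coefficients depending only on $n$ and $l$, of self-adjoint expressions built from $L$ and even powers of $T$, all of strictly lower order than $L^{l}$ in the sub-Riemannian scale. Using that $\langle-T^{2}v,v\rangle=\|Tv\|^{2}=\|(Y_{p}X_{p}-X_{p}Y_{p})v\|^{2}$ for any $p$, so that $T^{2}$ is subordinate to $L^{2}$ on the eigenfunctions, and then interpolating the surviving powers of $L$ by (\ref{hok}) (for instance $\langle L^{l-1}u_{i},u_{i}\rangle\le\lambda_{i}^{(l-1)/l}$ and $\langle L^{l-2}u_{i},u_{i}\rangle\le\lambda_{i}^{(l-2)/l}$), all of these corrections are absorbed into a single constant multiple of $\lambda_{i}+\lambda_{i}^{(l-2)/l}$. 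Collecting constants yields $M_{i}\le 2l(n+l-1)\lambda_{i}^{(l-1)/l}+c_{1}(n,l)\big(\lambda_{i}+\lambda_{i}^{(l-2)/l}\big)$ when $l$ is odd, and the analogous estimate with a constant $c_{2}(n,l)$ when $l$ is even. The parity enters exactly as in Section 3: the leading part of $M_{i}$ is the squared norm $\|L^{(l-1)/2}u_{i}\|^{2}$ when $l$ is odd but $\sum_{p=1}^{n}\big(\|X_{p}L^{(l-2)/2}u_{i}\|^{2}+\|Y_{p}L^{(l-2)/2}u_{i}\|^{2}\big)$ when $l$ is even, which also slightly modifies the precise leading constant and accounts for the difference between the two stated forms. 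Substituting the three estimates into (\ref{a}) and extracting a square root then gives (\ref{stat a}) and (\ref{stat b}).

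I expect the main obstacle to be exactly this iterated commutator computation of $[x_{p},[L^{l},x_{p}]]$ and $[y_{p},[L^{l},y_{p}]]$ in the non-commutative group $\mathbb{H}^{n}$: one must track every $T$-term generated by $[L,X_{p}]$ and $[L,Y_{p}]$, verify that each is genuinely of lower order and can be arranged into a sign-controlled form so that the power-mean estimate applies, and then pair each surviving self-adjoint expression with the correct instance of (\ref{hok}). This is what forces the separate treatment of $l=1$, $l=2$ and $l\ge 3$, and is why it is cleanest to leave $c_{1}(n,l)$ and $c_{2}(n,l)$ implicit.
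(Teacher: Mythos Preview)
Your overall framework coincides with the paper's: both apply Theorem~\ref{first theorem} with $A=L^{l}$, the $2n$ multiplication operators $B_{p}$, the $2n$ vector fields $T_{p}$, and $f=g=(\lambda_{k+1}-x)^{2}$, and both reduce matters to bounding $M_{i}:=\sum_{p}\big(\langle[L^{l},x_{p}]u_{i},x_{p}u_{i}\rangle+\langle[L^{l},y_{p}]u_{i},y_{p}u_{i}\rangle\big)$ together with $\sum_{p}\|T_{p}u_{i}\|^{2}$. For the latter the paper invokes the Niu--Zhang interpolation lemma (Lemma~\ref{lemma1}), proved specifically for the Kohn Laplacian with these boundary conditions, rather than Hook's Theorem~\ref{Hook2}; your appeal to Theorem~\ref{Hook2} is plausible but its hypotheses $\langle L^{q}u_{i},u_{i}\rangle=\langle L^{q-r}u_{i},L^{r}u_{i}\rangle$ require moving powers of $L$ across the inner product, and $L^{r}u_{i}$ need not vanish on $\partial\Omega$, so this step deserves care.

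The substantive divergence is in the estimate of $M_{i}$. The paper does \emph{not} use your ``Hook-like part plus $T$-corrections'' decomposition via $[x_{p},[L^{l},x_{p}]]$; instead it expands $\langle[L^{l},x_{p}]u_{i},x_{p}u_{i}\rangle$ directly by iterating the Niu--Zhang identity $L^{d}(x_{p}u_{i})=x_{p}L^{d}u_{i}-2\sum_{q=1}^{d}L^{d-q}X_{p}L^{q-1}u_{i}$ (Lemma~\ref{lemma2}) to reach the triple sum (\ref{k1}), and then simply quotes the term-by-term estimates from the proof of Theorem~5.1 in \cite{Niu-Zhang} to obtain (\ref{w1}) and (\ref{y1}) with the explicit constants $c_{1}(n,l),c_{2}(n,l)$ recorded there. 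Your alternative is conceptually appealing, but the proposal leaves its hardest step unresolved: the claim that all $T$-corrections are ``self-adjoint expressions built from $L$ and even powers of $T$'' controllable through ``$T^{2}$ subordinate to $L^{2}$'' is not substantiated---you would need a precise inequality of the type $\langle -T^{2}v,v\rangle\le C\langle L^{2}v,v\rangle$ on the relevant functions, and then a genuine bookkeeping of which powers $\lambda_{i}^{m/l}$ survive. In particular, your explanation of the odd/even dichotomy is not the operative one: the two representations of $\langle L^{l-1}u_{i},u_{i}\rangle$ you cite are \emph{equal}, so they cannot alter the leading constant. In the Niu--Zhang computation the parity of $l$ enters through how the cross-terms in (\ref{k1}) pair up and are estimated, and that is what forces the corrections into $\lambda_{i}+\lambda_{i}^{(l-2)/l}$ for odd $l$ but only $\lambda_{i}^{(l-1)/l}$ for even $l$.
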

\begin{proof}
If we apply inequality (\ref{a}) with
$A=L^l=(-\Delta_{\mathbb{H}^n})^l$,
$B_1=x_1,\ldots,B_n=x_n,B_{n+1}=y_1,\ldots,B_{2n}=y_n$,
$T_1=X_1,\ldots,T_n=X_n,T_{n+1}=Y_1,\ldots,T_{2n}=Y_n$ and $f(x)=g(x)=(\lambda_{k+1}-x)^2$, then we obtain
\begin{align}
\bigg[\sum_{i=1}^k\sum_{p=1}^n(\lambda_{k+1}&-\lambda_i)^2\Big(\langle [X_p,x_p]u_i,u_i\rangle_{L^2}+\langle[Y_p,y_p]u_i,u_i\rangle_{L^2}\Big)\bigg]^2 \nonumber\\
\leq
4\bigg[\sum_{i=1}^k\sum_{p=1}^n(\lambda_{k+1}-&\lambda_i)^{2}\Big(\langle
[L^l,x_p]u_i,x_p u_i\rangle_{L^2}+ \langle [L^l,y_p]u_i,y_p
u_i\rangle_{L^2}\Big)\bigg]\times\nonumber\\
\bigg[\sum_{i=1}^k &\sum_{p=1}^n(\lambda_{k+1}-\lambda_i)\Big(\|X_p
u_i\|_{L^2}^2+\|Y_p u_i\|_{L^2}^2\Big)\bigg].\label{g1}
\end{align}
And as before, we have \begin{equation}\label{b3}
\langle[X_p,x_p]u_i,u_i\rangle_{L^2}=\langle[Y_p,y_p]u_i,u_i\rangle_{L^2}=1.
\end{equation}
On the other hand, to calculate $\sum_{p=1}^n \Big(\|X_p
u_i\|_{L^2}^2+\|Y_p u_i\|_{L^2}^2\Big)$, we need the following
result obtained by Niu and Zhang (see Lemma 2.3 in \cite{Niu-Zhang}) inspired by that of Chen and Qian \cite{ChenQian3} for the Laplacian:
\begin{lemma}\label{lemma1}
For any $d \geq 1$, we have
\begin{equation}
 \bigg(\int_{\Omega}\arrowvert \nabla_{\mathbb{H}^n}^d u_i\arrowvert^2\bigg)^{\frac{1}{d}} \leq \bigg(\int_{\Omega} \arrowvert \nabla_{\mathbb{H}^n}^{d+1}u_i\arrowvert^2 \bigg)^{\frac{1}{d+1}}
\end{equation}
where $\nabla^d=\begin{cases} L^{\frac{d}{2}} & \text{if $\hbox{d is
even}$,}\\
\nabla_{\mathbb{H}^n} L^{\frac{d-1}{2}} &\text{if $\hbox{d is
odd}$.}
\end{cases}$
\end{lemma}
And as a consequence (see Corollary 2.1 in \cite{Niu-Zhang}), we can easily obtain, for any $d \geq 1$ 
\begin{equation}
 \bigg(\int_{\Omega}\arrowvert \nabla_{\mathbb{H}^n} u_i\arrowvert^2\bigg) \leq \bigg(\int_{\Omega} \arrowvert \nabla^{d} _{\mathbb{H}^n}u_{i}\arrowvert^2 \bigg)^{\frac{1}{d}}.
\end{equation}
Therefore we have
\begin{equation}
\sum_{p=1}^n\bigg(\|X_p u_i\|_{L^2}^2+\|Y_p u_i\|_{L^2}^2\bigg)=\int_{\Omega}Lu_i.u_i \leq \bigg(\int_{\Omega} L^l u_i.u_i \bigg)^{\frac{1}{l}} =\lambda_i^{\frac{1}{l}}.\label{i2}
\end{equation}
Now we have to calculate
\begin{equation*}\sum_{p=1}^n\bigg(\langle
[L^l,x_p]u_i,x_p u_i\rangle_{L^2}+\langle [L^l,y_p]u_i,y_p
u_i\rangle_{L^2}\bigg).
\end{equation*}
For this purpose, we use the following lemma also obtained by Niu
and Zhang in \cite{Niu-Zhang}
\begin{lemma}\label{lemma2}
For any positive integer $d$, $1 \leq d \leq l$, we have
\begin{equation*}
L^d(x_pu_i)=x_p L^du_i - 2\sum_{q=1}^d L^{d-q}X_pL^{q-1}u_i,
\end{equation*}
$i=1,\ldots,k,\,p=1,\ldots,n$. This is also true for $y_p$ and
$Y_p$.
\end{lemma}
We infer, using Lemma \ref{lemma2},
\begin{equation*}
 [L^l,x_p]u_i  = L^l(x_p u_i) - x_p L^l u_i  = -2 \sum_{q=1}^l L^{l-q}X_p L^{q-1}u_i.
\end{equation*}
Therefore
\begin{align*}
\langle [L^l,x_p]u_i,x_p u_i \rangle_{L^2} & = -2 \sum_{q=1}^l \int_{\Omega} L^{l-q}X_p L^{q-1}u_i. x_p u_i \nonumber\\
& = -2 \sum_{q=1}^l \int_{\Omega} X_p L^{q-1}u_i.L^{l-q}(x_p u_i).
\end{align*}
The same identities hold with $y_p$ and $Y_p$. \\
Hence we obtain
\begin{align}
 \sum_{p=1}^n \bigg(\langle [L^l,x_p]u_i,x_p u_i \rangle_{L^2} + & \langle [L^l,y_p]u_i,y_p u_i \rangle_{L^2} \bigg)\nonumber\\
\label{j1}= -2\sum_{p=1}^n \sum_{q=1}^l \bigg(\int_{\Omega}X_p L^{q-1}u_i.L^{l-q}(x_p u_i) + & \int_{\Omega}Y_p L^{q-1}u_i.L^{l-q}(y_p u_i) \bigg).
\end{align}
Applying Lemma \ref{lemma2} once again to (\ref{j1}), we obtain
\begin{align}
\sum_{p=1}^n &\bigg(\langle [L^l,x_p]u_i,x_p u_i \rangle_{L^2} + \langle [L^l,y_p]u_i,y_p u_i \rangle_{L^2} \bigg)\nonumber\\
=-2\sum_{p=1}^n\sum_{q=1}^l\int_{\Omega}&x_pL^{l-q}u_i.X_p L^{q-1}u_i+4\sum_{p=1}^n\sum_{q=1}^{l-1}\int_{\Omega}X_pL^{l-q-1}u_i.X_p L^{q-1}u_i\nonumber\\
&+4\sum_{p=1}^n\sum_{q=1}^{l-2}\sum_{r=1}^{l-q-1}\int_{\Omega}L^{l-q-r}X_pL^{r-1}u_i.X_pL^{q-1}u_i\nonumber\\
-2\sum_{p=1}^n\sum_{q=1}^l\int_{\Omega}&y_pL^{l-q}u_i.Y_p
L^{q-1}u_i+4\sum_{p=1}^n\sum_{q=1}^{l-1}\int_{\Omega}Y_pL^{l-q-1}u_i.Y_p
L^{q-1}u_i\nonumber\\
&+4\sum_{p=1}^n\sum_{q=1}^{l-2}\sum_{r=1}^{l-q-1}\int_{\Omega}L^{l-q-r}Y_pL^{r-1}u_i.Y_pL^{q-1}u_i\label{k1}.
\end{align}
As in the proof of Theorem 5.1 in \cite{Niu-Zhang} (see the
calculation of the terms $I_2$ and $I'_2$), we can easily obtain, 
for any odd $l \ge 3$,
\begin{align}\label{w1}
\displaystyle\sum_{p=1}^n \bigg(\langle [L^l,x_p]u_i,x_p u_i
&\rangle_{L^2} + \langle [L^l,y_p]u_i,y_p u_i \rangle_{L^2} \bigg)\nonumber\\
\leq
\big(2l(n+l-1)\big)\lambda_i^{\frac{l-1}{l}}+&c_1(n,l)\bigg(\lambda_i+\lambda_i^{\frac{l-2}{l}}\bigg)
\end{align}
and for any even $l\ge 4$,
\begin{align}\label{y1}
\displaystyle\sum_{p=1}^n \bigg(\langle [L^l,x_p]u_i,x_p u_i
&\rangle_{L^2} + \langle [L^l,y_p]u_i,y_p u_i \rangle_{L^2} \bigg)\nonumber\\
\leq \bigg[\big(2ln+4(l-1)\big)+&c_2(n,l)\bigg]\lambda_i^{\frac{l-1}{l}}
\end{align}
where $c_1(n,3)=4$, \\
$\displaystyle
c_1(n,l)=2\sum_{q=1}^{l-2}\sum_{r=1}^{l-q-1}\bigg\{\sum_{\substack{s=
1 \\ s
\,odd}}^{l-q-r}\frac{2^snC_{l-q-r}^s}{(2n-1)^{\frac{s+1}{2}}}+\sum_{\substack{s=2 \\ s
\,even}}^{l-q-r}\frac{2^sC_{l-q-r}^s}{(2n-1)^{\frac{s}{2}}}\bigg\}$ for any odd $l \ge 5$
and  $\displaystyle
c_2(n,l)=4\sum_{q=1}^{l-2}\sum_{r=1}^{l-q-1}\bigg\{\sum_{\substack{s= 1
\\ s \,odd}}^{l-q-r}\frac{2^snC_{l-q-r}^s}{(2n-1)^{\frac{s+1}{2}}}
+\sum_{\substack{s= 0 \\ s
\,even}}^{l-q-r}\frac{2^sC_{l-q-r}^s}{(2n-1)^{\frac{s}{2}}}\bigg\}.$\\
Incorporating (\ref{b3}), (\ref{i2}) and (\ref{w1}) in (\ref{g1}), we obtain (\ref{stat a}). Similarly, to obtain (\ref{stat b}), we incorporate (\ref{b3}), (\ref{i2}) and (\ref{y1}) in (\ref{g1}).
\end{proof}
\begin{remark}
The inequality (\ref{stat a}) is not homogeneous in the eigenvalues $\lambda_i$ (i.e. it is not invariant under the change $L \rightarrow aL$, $\lambda_{i} \rightarrow a^{l}\lambda_{i}$ for $a>0$). Therefore, using inequality (\ref{stat a}) for $a^{l}L^l$, we obtain that for any $a>0$,
\begin{align}\label{p3}
\sum_{i=1}^k(\lambda_{k+1}-\lambda_i)^2 & \leq \frac{1}{n}\bigg[\sum_{i=1}^k (\lambda_{k+1}-\lambda_i)\lambda_i^{\frac{1}{l}}\bigg]^{\frac{1}{2}} \times \nonumber\\
\bigg\{\sum_{i=1}^k(\lambda_{k+1}&-\lambda_i)^{2}\bigg[2l(n+l-1)\lambda_i^{\frac{l-1}{l}}+c_1(n,l)\Big(a\lambda_i+\frac{1}{a}\lambda_i^{\frac{l-2}{l}}\Big) \bigg]\bigg\}^{\frac{1}{2}},
\end{align}
for any odd $l \ge 3$.\\
Optimising with respect to $a$, we find the following improvement of the inequality (\ref{stat a}) 
\begin{align}\label{r3}
\sum_{i=1}^k(\lambda_{k+1}-\lambda_i)^2  \leq \frac{1}{n}\Big(2l(n+l-1)+c_1(n,l)\Big)^{\frac{1}{2}}&\bigg[\sum_{i=1}^k (\lambda_{k+1}-\lambda_i)\lambda_i^{\frac{1}{l}}\bigg]^{\frac{1}{2}} \times \nonumber\\
\bigg[\sum_{i=1}^k(\lambda_{k+1}&-\lambda_i)^{2}\lambda_i^{\frac{l-1}{l}}\bigg]^{\frac{1}{2}}
\end{align}
which is homogeneous on the eigenvalues $\lambda_i$.
\end{remark}
As for the case when $l=2$, we can deduce inequalities of Yang-type
for $l\ge 3$.
\begin{corollary}
We have, for any odd $l\ge 3$,
\begin{align}\label{cor2}
{}\sum_{i=1}^k&(\lambda_{k+1}-\lambda_i)^2 \\ 
\leq\frac{1}{n^2}\sum_{i=1}^k(\lambda_{k+1}-\lambda_i)&\nonumber\bigg[\big(2l(n+l-1)\big)\lambda_i+c_1(n,l)\Big(\lambda_i^{\frac{l+1}{l}}+\lambda_i^{\frac{l-1}{l}}\Big)\bigg]
\end{align}
and for any even $l\ge 4$
\begin{equation}\label{cor3}
 \sum_{i=1}^k(\lambda_{k+1}-\lambda_i)^2\leq \frac{2ln+4(l-1)+c_2(n,l)}{n^2}\sum_{i=1}^k(\lambda_{k+1}-\lambda_i)\lambda_i.
\end{equation}
where $c_{1}(n,l)$ and $c_{2}(n,l)$ are explicit constants depending only on $n$ and $l$.
\end{corollary}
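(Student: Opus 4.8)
The plan is to derive both inequalities of the corollary from Theorem~\ref{6th theorem} by the same elementary maneuver used to pass from~(\ref{c3}) to~(\ref{cor1}) in the case $l=2$: square the relevant inequality of Theorem~\ref{6th theorem} and then collapse the resulting product of sums by means of the Chebyshev-type Lemma~\ref{lemma3}. Throughout I would abbreviate $A_i=\lambda_{k+1}-\lambda_i$. Because the eigenvalues are nondecreasing and $\lambda_{k+1}>\lambda_k\ge\lambda_i$ for $i\le k$, one has $A_1\ge A_2\ge\cdots\ge A_k>0$, while for every exponent $\gamma\ge0$ the sequence $(\lambda_i^{\gamma})_{1\le i\le k}$ is nonnegative and nondecreasing; these are exactly the monotonicity hypotheses required by Lemma~\ref{lemma3}. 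Note also that $\sum_{i=1}^k A_i^2>0$, so the division performed at the last step is legitimate.

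For odd $l\ge3$, I would start from~(\ref{stat a}), square both sides, and obtain
\[
\Big(\sum_{i=1}^k A_i^2\Big)^2\le\frac{1}{n^2}\Big(\sum_{i=1}^k A_i\lambda_i^{\frac{1}{l}}\Big)\Big(\sum_{i=1}^k A_i^2\big[\,2l(n+l-1)\lambda_i^{\frac{l-1}{l}}+c_1(n,l)\big(\lambda_i+\lambda_i^{\frac{l-2}{l}}\big)\big]\Big).
\]
Then I would split the second factor on the right into its three monomial pieces and apply Lemma~\ref{lemma3} to each of them, always taking $C_i=\lambda_i^{\frac{1}{l}}$ and taking $B_i$ equal to $\lambda_i^{\frac{l-1}{l}}$, to $\lambda_i$, and to $\lambda_i^{\frac{l-2}{l}}$ respectively, so that the products $B_iC_i$ become $\lambda_i$, $\lambda_i^{\frac{l+1}{l}}$, and $\lambda_i^{\frac{l-1}{l}}$. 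Adding the three inequalities so produced yields
\[
\Big(\sum_{i=1}^k A_i\lambda_i^{\frac{1}{l}}\Big)\Big(\sum_{i=1}^k A_i^2\big[\cdots\big]\Big)\le\Big(\sum_{i=1}^k A_i^2\Big)\sum_{i=1}^k A_i\big[\,2l(n+l-1)\lambda_i+c_1(n,l)\big(\lambda_i^{\frac{l+1}{l}}+\lambda_i^{\frac{l-1}{l}}\big)\big],
\]
and substituting this into the squared inequality and dividing through by $\sum_{i=1}^k A_i^2$ gives~(\ref{cor2}).

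For even $l\ge4$ the computation is shorter, since the bracket in~(\ref{stat b}) is a single monomial $C\lambda_i^{\frac{l-1}{l}}$ with $C=2ln+4(l-1)+c_2(n,l)$. Squaring~(\ref{stat b}) gives
\[
\Big(\sum_{i=1}^k A_i^2\Big)^2\le\frac{C}{n^2}\Big(\sum_{i=1}^k A_i\lambda_i^{\frac{1}{l}}\Big)\Big(\sum_{i=1}^k A_i^2\lambda_i^{\frac{l-1}{l}}\Big),
\]
and a single application of Lemma~\ref{lemma3} with $B_i=\lambda_i^{\frac{l-1}{l}}$, $C_i=\lambda_i^{\frac{1}{l}}$ (hence $B_iC_i=\lambda_i$) bounds the product on the right by $\big(\sum_{i=1}^k A_i^2\big)\big(\sum_{i=1}^k A_i\lambda_i\big)$; dividing by $\sum_{i=1}^k A_i^2$ produces~(\ref{cor3}).

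I do not expect a genuine obstacle here: the whole argument is the squaring-plus-Chebyshev step already carried out for $l=2$. The one point needing a little care is the bookkeeping in the odd case, namely choosing, for each of the three monomials occurring in~(\ref{stat a}), the pair $(B_i,C_i)$ for which $B_iC_i$ reproduces precisely the power appearing in~(\ref{cor2}); one should also record once and for all the monotonicity of $(A_i)$ and of the power sequences, together with the strict positivity of $\sum_{i=1}^k(\lambda_{k+1}-\lambda_i)^2$, which justify using Lemma~\ref{lemma3} and dividing at the end. If one instead wanted a homogeneous Yang-type bound for odd $l$, the same recipe applied to the improved inequality~(\ref{r3}) would give $\sum_{i=1}^k A_i^2\le\frac{2l(n+l-1)+c_1(n,l)}{n^2}\sum_{i=1}^k A_i\lambda_i$.
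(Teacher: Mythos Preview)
Your proposal is correct and follows essentially the same route as the paper: square the inequality from Theorem~\ref{6th theorem} and apply Lemma~\ref{lemma3} with $A_i=\lambda_{k+1}-\lambda_i$ and $C_i=\lambda_i^{1/l}$, then divide by $\sum_i A_i^2$. The only cosmetic difference is that in the odd case you apply Lemma~\ref{lemma3} once to each of the three monomials $\lambda_i^{(l-1)/l}$, $\lambda_i$, $\lambda_i^{(l-2)/l}$ and add, whereas the paper applies it once with $B_i$ equal to the full bracket $2l(n+l-1)\lambda_i^{(l-1)/l}+c_1(n,l)\big(\lambda_i+\lambda_i^{(l-2)/l}\big)$; since $c_1(n,l)>0$ this bracket is nondecreasing in $i$, and the two variants yield the same inequality.
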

\begin{proof}
Applying Lemma \ref{lemma3} with $A_i=\lambda_{k+1}-\lambda_i$,
$B_i=2l(n+l-1)\lambda_i^{\frac{l-1}{l}}+c_1(n,l)(\lambda_i+\lambda_i^{\frac{l-2}{l}})$
and $C_i=\lambda_i^{\frac{1}{l}}$, we obtain
\begin{align}\label{lem1}
\bigg[\sum_{i=1}^k(\lambda_{k+1}-\lambda_i)\lambda_i^{\frac{1}{l}}\bigg]&\bigg\{\sum_{i=1}^k(\lambda_{k+1}-\lambda_i)^{2}\bigg[2l(n+l-1)\lambda_i^{\frac{l-1}{l}}\nonumber\\
+c_1(n,l)\bigg(\lambda_i&+\lambda_i^{\frac{l-2}{l}}\bigg)\bigg]\bigg\}\leq \sum_{i=1}^k(\lambda_{k+1}-\lambda_i)^2\times\nonumber\\
\sum_{i=1}^k(\lambda_{k+1}-\lambda_i)&\lambda_i^{\frac{1}{l}}\bigg[2l(n+l-1)\lambda_i^{\frac{l-1}{l}}+c_1(n,l)\bigg(\lambda_i+\lambda_i^{\frac{l-2}{l}}\bigg)\bigg]\nonumber\\
=\sum_{i=1}^k(\lambda_{k+1}-\lambda_i)^2\sum_{i=1}^k&(\lambda_{k+1}-\lambda_i)\bigg[2l(n+l-1)\lambda_i+c_1(n,l)\bigg(\lambda_i^{\frac{l+1}{l}}+\lambda_i^{\frac{l-1}{l}}\bigg)\bigg]
\end{align}
Inequality (\ref{cor2}) can be deduced from (\ref{stat a}) and
(\ref{lem1}), for any odd $l\ge 3$.\\We proceed in the same way to
obtain (\ref{cor3}), i.e. applying Lemma \ref{lemma3} but with
$A_i=\lambda_{k+1}-\lambda_i$,
$B_i=\Big(2ln+4(l-1)+c_2(n,l)\Big)\lambda_i^{\frac{l-1}{l}}$ and
$C_i=\lambda_i^{\frac{1}{l}}$.
\end{proof}
\begin{remark}
Inequalities (\ref{stat a}) and (\ref{stat b}) are sharper than the following inequalities, proved by Niu and Zhang (\cite{Niu-Zhang}),
\begin{equation}\label{ineq 1}
 \lambda_{k+1}-\lambda_k \leq \frac{\sum_{i=1}^k\lambda_i^{\frac{1}{l}}}{n^2k^2}\bigg[\big(2l(n+l-1)\big)\sum_{i=1}^k\lambda_i^{\frac{l-1}{l}}+c_1(n,l)\sum_{i=1}^k\bigg(\lambda_i+\lambda_i^{\frac{l-2}{l}}\bigg)\bigg]
\end{equation}
if $l\ge 3$ is odd and
\begin{equation}\label{ineq 2}
\lambda_{k+1}-\lambda_k \leq \frac{\sum_{i=1}^k\lambda_i^{\frac{1}{l}}}{n^2k^2}\bigg[\big(2ln+4(l-1)\big)\sum_{i=1}^k\lambda_i^{\frac{l-1}{l}}+c_2(n,l)\sum_{i=1}^k \lambda_i^{\frac{l-1}{l}}\bigg],
\end{equation}
if $l \ge 4$ is even,\\ 
$c_1(n,l)$ and $c_2(n,l)$ are as in the proof of Theorem \ref{6th theorem}.
\end{remark}
\begin{proof}
 By the Chebyshev inequality, we infer from (\ref{stat a}), for any odd $l\ge 3$
\begin{align*}
\bigg[\sum_{i=1}^k &(\lambda_{k+1}-\lambda_i)^2\bigg]^2\leq \frac{1}{n^2k^2}\bigg[\sum_{i=1}^k (\lambda_{k+1}-\lambda_i)\bigg]\bigg[\sum_{i=1}^k (\lambda_{k+1}-\lambda_i)^2\bigg] \times\\
&
\bigg[\sum_{i=1}^k\lambda_i^{\frac{1}{l}}\bigg]\bigg[2l(n+l-1)\sum_{i=1}^k\lambda_i^{\frac{l-1}{l}}+c_1(n,l)\sum_{i=1}^k\bigg(\lambda_i+\lambda_i^{\frac{l-2}{l}}\bigg)\bigg]
\end{align*}
or equivalently
\begin{align*}
\sum_{i=1}^k &(\lambda_{k+1}-\lambda_i)^2 \leq \frac{1}{n^2k^2}\bigg[\sum_{i=1}^k (\lambda_{k+1}-\lambda_i)\bigg]\bigg[\sum_{i=1}^k\lambda_i^{\frac{1}{l}}\bigg]\times\\
&\bigg[2l(n+l-1)\sum_{i=1}^k\lambda_i^{\frac{l-1}{l}}+c_1(n,l)\sum_{i=1}^k\bigg(\lambda_i+\lambda_i^{\frac{l-2}{l}}\bigg)\bigg].
\end{align*}
Thus \begin{align}\label{l3}
\sum_{i=1}^k(\lambda_{k+1}-&\lambda_i)\Bigg[\lambda_{k+1}-\lambda_i-\frac{1}{n^2k^2}\bigg(\sum_{i=1}^k\lambda_i^{\frac{1}{l}}\bigg)\times\nonumber\\
&\bigg(2l(n+l-1)\sum_{i=1}^k\lambda_i^{\frac{l-1}{l}}+c_1(n,l)\sum_{i=1}^k\Big(\lambda_i+\lambda_i^{\frac{l-2}{l}}\Big)\bigg)\Bigg]\leq 0
     \end{align}
which implies (\ref{ineq 1}), since $\lambda_i \leq \lambda_{k}$ for
$i \le k$.\\ Similarly, we prove that inequality (\ref{stat b}) is
sharper than (\ref{ineq 2}).
\end{proof}

\subsection*{Acknowledgments}
We thank the referee for the suggestions which allowed us to improve the first version of the paper.\\
This work was partially supported by the ANR (Agence Nationale de la Recherche) through FOG project(ANR-07-BLAN-0251-01).

\end{document}